\newtheorem{theorem}{Theorem}[section]
\newtheorem{definition}[theorem]{Definition}
\newtheorem{lemma}[theorem]{Lemma}
\newtheorem{remark}{Remark}[section]
\numberwithin{equation}{section}
\begin{document}
\title[ ]{A distributed control problem for a three-dimensional Lagrangian averaged
Navier-Stokes-$\alpha$ model }
\author[E.J. Villamizar-Roa]{E.J. Villamizar-Roa}
\address[E.J. Villamizar-Roa]{(Corresponding Author) Universidad Industrial de Santander, Escuela de
Matem\'{a}ticas, A.A. 678, Bucaramanga, Colombia.} \email{jvillami@uis.edu.co}
\author[E. Ortega-Torres]{E. Ortega-Torres }
\address[E. Ortega-Torres]{ Departamento de Matem\'aticas, Universidad
Cat\'olica del Norte, Casilla 1280, Antofagasta-Chile.}
\email{eortega@ucn.cl}
\thanks{E. Ortega-Torres was supported by Fondecyt-Chile, Grant 1080399}
\date{\today}
\keywords{Optimal control problem, $\alpha$-Navier-Stokes model,
optimality conditions.} 

\begin{abstract}
A distributed optimal control problem with final observation for a
three-dimensional Lagrange averaged Navier-Stokes-$\alpha$ model is
studied. The solvability of the optimal control problem is proved
and the first-order optimality conditions are established. Moreover,
by using the Lagrange multipliers method, an optimality system in a
weak and a strong form is derived.
\end{abstract}
\maketitle

\section{Introduction}
We are interested in the study of an optimal control problem for a
Lagrange averaged Navier-Stokes-$\alpha$ model (also known as
LANS-$\alpha$ or the viscous Camassa-Holm system). The LANS-$\alpha$
model, introduced by S. Chen, C. Foias, D.D. Holm, E. Oslon, E.S.
Titi, and S. Wynne in \cite{chen1}, is the first one to use
Lagrangian averaging to address the turbulence closure problem, that
is, the problem of capturing the physical phenomenon of turbulence
at computably low resolution. This model provides closure by
modifying the nonlinearity in the Navier-Stokes equations to stop
the cascading of turbulence at scales smaller than a certain length,
but without introducing any extra dissipation (c.f. \cite{lans2,
chen1, chen2, chen3, holm, lans4}). The mathematical model is
obtained by regularizing the 3D Navier-Stokes equations through a
filtration of the fluid motion that occurs below a certain length
scale $\alpha^{1/2};$ the length scale is the filter width derived
from inverting the Helmholtz operator $I-\alpha\Delta.$ Explicitly,
LANS-$\alpha$ model can be written in the following form:
\begin{equation}
\left\{
\begin{array}
[c]{rcl}%
&&(I-\alpha \Delta) u_t+\nu (I-\alpha \Delta)
Au+(u\cdot\nabla)\,(I-\alpha \Delta)u
-\alpha (\nabla u)^*\cdot \Delta u + \nabla p= f \mbox{ in } Q, \\
&& \nabla \cdot u =0 \mbox{ in } Q,\\
&& u=0, \quad Au=0 \mbox{ on } \Gamma \times (0,T),\\
&& u(x,0)= u_0(x) \mbox{ in } \Omega,
\end{array}
\right.  \label{eq1}
\end{equation}
where $u$ and $p$ are unknown, representing respectively, the
large-scale (or averaged) velocity and the pressure, in each point
of $Q=\Omega  \times (0,T), 0 < T <\infty.$ Here, $\Omega$ is a
bounded domain in $\mathbb{R}^3$ (with boundary $\Gamma$of class
$C^2$) where the fluid is occurring, and $(0,T)$ is a time interval.
The operator $A$ denotes the known Stokes operator. Moreover, the
right hand side $f$ is a fixed external force and $u_0$ a given
initial velocity field. The positive constant $\nu$ represents the
kinematic viscosity of the fluid.

The interest of studying the LANS-$\alpha$ models arises principally
in the approximation of many problems relating to turbulent flows
because it preserves the properties of transport for circulation and
vorticity dynamics of the Navier-Stokes equations. One of the main
reasons justifying its use is the high-computational cost that the
Navier-Stokes model requires \cite{lans2}. Notice that when
$\alpha=0$ the LANS-$\alpha$ model reduces to the classical
Navier-Stokes system. We refer \cite{lans2, chen1, chen2, chen3,
lans7, lans8, holm, lans4} and references therein, for a complete
description of the development of the LANS-$\alpha$ model, as well
as, a discussion about the physical significance, namely, in
turbulence theory.

From a mathematical point of view, several advances related the
well-posedness, long time behavior, decay rates of the velocity and
the vorticity, the connection between the solutions of the
LANS-$\alpha$ model and the 3D Navier-Stokes system and
Leray-$\alpha$ model, the existence and uniqueness of solutions for
stochastic versions, have been developed in last years, see for
instance \cite{lans9, lans2, marquez-duran, lans5, Cheskidov,
coutand, Fo-Ho-Ti, lans7, lans8, lans4, lans3} and references
therein. In particular, opposed to three dimensional Navier-Stokes
equations, for LANS-$\alpha$ model, the existence and uniqueness of
weak solutions is known (see for instance \cite{lans7}). This point
is relevant in control problems because it permits to guarantee that
the reaction of the flow produced by the action of a control is
unique.

In this paper we are interested in an optimal control problem for
the LANS-$\alpha$ model (\ref{eq1}) where the body force is regarded
as the control and a final observation is considered; in this sense
we say that it is an optimal control problem for a distributed
parameter system with final observation. More precisely, we wish to
minimize the functional
\[J(u,v)=\frac{\gamma_1}{2}\int_0^T\Vert u(t)-u_d(t)\|_{D(A)}^2
dt+\frac{\gamma_2}{2}\int_\Omega\vert u(x,T)-u_T(x)\vert^2dx
+\frac{\gamma_3}{2}\int_0^T \|v(t)\|_{2}^2 dt,\] where the velocity
field is subject to state system (\ref{eq1}) where $f$ is now
replaced by the distributed control field $v.$ The functions $u_d,
u_T$ are given and denote the desired state, and the parameters
$\gamma_1,\gamma_2,\gamma_3>0$ stand the cost coefficients for the
control. The exact mathematical formulation will be given in Section
3. We will prove the solvability of the optimal control problem and
state the first-order optimality conditions. By using the Lagrange
multipliers method we derive an optimality system in a weak and
strong formulation. To the best of our knowledge, this paper is the
first work dealing with optimal control problems where the state
variable satisfies the 3D LANS-$\alpha$ model (\ref{eq1}). However,
in the recent papers \cite{tian2,tian1} the authors studied the
problem of optimal control of the viscous Camassa-Holm equation in
one dimension. The models treated in \cite{tian2,tian1} can be
viewed as one dimensional versions of the three dimensional
LANS-$\alpha$ model.

Related to the nonstationary Navier-Stokes system, there are many
results available in the literature concerned with the study of
optimal control problems (see \cite{fursikov} and references
therein). In particular, necessary conditions for optimal control of
2D-Navier-Stokes model can be found in \cite{abergel, gunzburger1,
gunzburger2, hinze}. Necessary conditions for optimal control of 3D
Navier-Stokes were obtained in \cite{casas}.

The paper is organized as follows. In section 2 we establish the
notation to be used and recall some known results for the
LANS-$\alpha$ model. In section 3 we setting the precise optimal
control problem and prove the existence of optimal solutions. In
section 4 we derive the first-order optimality conditions, and by
using the Lagrange multipliers method we derive an optimality
system.

\section{Preliminaries}
Let $\Omega$ be a bounded domain in $\mathbb{R}^3$ with boundary
$\Gamma$ of class $C^2$. We denote by $\mathcal{D}(\Omega)$ and
$\mathcal{D}^{\prime}(\Omega)$ the space of functions of class
$C^{\infty}(\Omega)$ with compact support, and the space of
distributions on $\Omega,$ respectively. Throughout this paper we
use standard notations for Lebesgue and Sobolev spaces. In
particular, the $L^2(\Omega)$-norm and the $L^2(\Omega)$-inner
product, will be represented by $\Vert \cdot\Vert$ and
$(\cdot,\cdot),$ respectively. We consider the solenoidal Banach
spaces $H$ and $V$ defined, respectively, as the closure in
$(L^2(\Omega))^3$ and $(H^1(\Omega))^3$ of
\begin{eqnarray*}
\mathcal{V}&=&\{u \in (\mathcal{D}(\Omega))^3 : \nabla \cdot u=0
\mbox{ in } \Omega\}.
\end{eqnarray*}
Here, $\nabla \cdot u$ denotes the divergence of the field $u.$ The
norm and the inner product in $V$ will be denoted by $\Vert
u\Vert_V$ and $(\nabla u, \nabla v),$ respectively. Throughout this
paper, if $X$ is a Banach space with dual space $X'$, the duality
paring between $X'$ and $X$ will be denoted by $\langle\cdot,
\cdot\rangle_{X',X}.$ To simplify the notation, we will
use the same notation for vectorial valued and scalar valued spaces.

For $X$ Banach space, $\|\cdot\|_X$ denotes its norm and
$L^p(0,T;X)$ denotes the standard space of functions from $[0,T]$ to
$X,$ endowed with the norm
\[\|u\|_{L^p(0,T;X)}= \bigg( \int_0^T\|u\|_X^p dt\bigg)^{1/p}, \ 1\leq p <\infty,\qquad
 \|u\|_{L^\infty(0,T;X)}= \sup_{t\in (0,T)} \|u(t)\|_X.\]
In the sequel we will identify the spaces $L^p(0,T;X):= L^p(X)$ and
$L^p(0,T;L^p(\Omega)) :=L^p(Q)$.\newline We recall the following
compactness result:
\begin{lemma} (\cite{Simon})\label{lema1}
Let $B_0, B$ and $B_1$  be Banach spaces with $B_0\hookrightarrow B\hookrightarrow
B_1$ continuously and  $B_0\hookrightarrow B$ compact. For $1\leq p\leq
\infty$ and $T <\infty$ consider the Banach space
\begin{equation}\label{eq5}
W=\{ u \in L^{p} (0,T;B_0), \ u_t\in L^{1} (0,T;B_1)\}.
\end{equation}
Then $W \hookrightarrow L^p(0,T;B)$ compactly.
\end{lemma}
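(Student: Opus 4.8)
This is the Aubin--Lions--Simon compactness lemma, so the plan is to reduce it to two standard ingredients: an Ehrling-type interpolation inequality that exploits the compact embedding $B_0\hookrightarrow B$, and Simon's characterization of relatively compact subsets of $L^p(0,T;B)$ (the time-dependent analogue of the Fr\'echet--Kolmogorov criterion). Throughout I would fix an arbitrary bounded set $F\subset W$, say $\|u\|_{L^p(0,T;B_0)}+\|u_t\|_{L^1(0,T;B_1)}\le M$ for every $u\in F$, and show that $F$ is relatively compact in $L^p(0,T;B)$; since $W$ is continuously embedded in $L^p(0,T;B)$, this is exactly the compactness of the embedding.

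First I would establish the interpolation inequality: for each $\eta>0$ there is $C_\eta>0$ with $\|w\|_B\le \eta\,\|w\|_{B_0}+C_\eta\,\|w\|_{B_1}$ for all $w\in B_0$. This is proved by contradiction. If it failed for some $\eta_0>0$, one obtains a sequence $(w_n)$ with $\|w_n\|_{B_0}=1$ and $\|w_n\|_B>\eta_0+n\,\|w_n\|_{B_1}$; the compactness of $B_0\hookrightarrow B$ gives a subsequence converging in $B$ to some $w$, while the bound forces $\|w_n\|_{B_1}\to0$, so $w=0$ in $B_1$ and hence in $B$, contradicting $\|w\|_B\ge\eta_0>0$.

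Next I would verify the two conditions of Simon's criterion (for $1\le p<\infty$). Condition (i), relative compactness in $B$ of the integrals $\{\int_{t_1}^{t_2}u\,dt:u\in F\}$ for $0<t_1<t_2<T$, is immediate: by H\"older in time these integrals are bounded in $B_0$ by $(t_2-t_1)^{1-1/p}M$, hence precompact in $B$ by the compact embedding. For condition (ii), the uniform decay of the time translates, I would start from the pointwise bound $\|u(t+h)-u(t)\|_{B_1}\le\int_t^{t+h}\|u_t(s)\|_{B_1}\,ds$ and recognize the right-hand side as the convolution of $s\mapsto\|u_t(s)\|_{B_1}\in L^1(0,T)$ with $\chi_{[0,h]}$; Young's inequality then gives $\|u(\cdot+h)-u\|_{L^p(0,T-h;B_1)}\le h^{1/p}\|u_t\|_{L^1(0,T;B_1)}\le M\,h^{1/p}$, uniformly on $F$. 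Applying the interpolation inequality to $w=u(\cdot+h)-u$ and integrating in time yields
\[\|u(\cdot+h)-u\|_{L^p(0,T-h;B)}\le \eta\,\|u(\cdot+h)-u\|_{L^p(0,T-h;B_0)}+C_\eta\,\|u(\cdot+h)-u\|_{L^p(0,T-h;B_1)};\]
bounding the first term by $2\eta M$ and choosing first $\eta$ and then $h$ small makes this uniformly small, which is condition (ii).

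The step I expect to be the main obstacle is condition (ii), and specifically obtaining the decay \emph{uniformly} over $F$: the naive attempt to pass from the clean $L^1(0,T;B_1)$ translation estimate to an $L^p$ one fails, and one really needs the convolution structure and Young's inequality to extract the gain $h^{1/p}$. This also pinpoints why the endpoint $p=\infty$ is delicate, since $h^{1/p}\to1$ there; in that case the natural target is $C([0,T];B)$ and one replaces (ii) by equicontinuity together with Arzel\`a--Ascoli, which typically requires slightly more integrability of $u_t$ than $L^1$. For the applications in this paper only the range $1\le p<\infty$ (indeed $p=2$) is needed, so the argument above suffices.
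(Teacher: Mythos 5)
The paper does not actually prove Lemma \ref{lema1}: it is quoted, with no argument, from Simon's paper \cite{Simon}, so there is no internal proof to compare against. Your reconstruction is the standard proof of that cited result and is correct for $1\le p<\infty$: the contradiction proof of the Ehrling-type inequality is sound; condition (i) of Simon's translation criterion does follow from the H\"older bound of $\int_{t_1}^{t_2}u\,dt$ in $B_0$ together with the compactness of $B_0\hookrightarrow B$; and the gain $h^{1/p}$ in condition (ii) is correctly extracted from the convolution structure via Young's inequality. The only point you should make explicit is that $u_t\in L^1(0,T;B_1)$ gives $u$ an absolutely continuous $B_1$-valued representative, which is what legitimizes the pointwise bound $\|u(t+h)-u(t)\|_{B_1}\le\int_t^{t+h}\|u_t(s)\|_{B_1}\,ds$. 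Note also that your argument is not self-contained: it reduces the lemma to Simon's characterization of relatively compact subsets of $L^p(0,T;B)$, which is the main theorem of the very reference being cited; that is a legitimate reduction, but it should be acknowledged as such. Finally, your caveat about $p=\infty$ is not only apt but sharper than the paper's own statement: with $u_t$ only in $L^1(0,T;B_1)$ the conclusion at $p=\infty$ is in fact false as stated (take $u_n(t)=\phi(nt)v$ with $\phi$ a smooth cutoff vanishing near $0$ and equal to $1$ for large argument, $v\in B_0$ fixed: this is bounded in $W$ with $p=\infty$ yet has no subsequence converging in $L^\infty(0,T;B)$), and the correct endpoint version requires $u_t$ bounded in $L^r(0,T;B_1)$ with $r>1$, with conclusion in $C([0,T];B)$. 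Since the paper only ever applies the lemma with $p=2$, to obtain $\mathbb{W}\hookrightarrow L^2(V)$ compactly, this endpoint defect is harmless, and your argument fully covers the case actually used.
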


Let $P:L^2(\Omega) \rightarrow H$ be the Leray projector, and denote
by $A= - P\Delta$ the Stokes operator with domain
$D(A)=H^2(\Omega)\cap V$. It is known that $A$ is a self-adjoint
positive operator with compact inverse. Since $\Gamma$ is of class
$C^2$, the norms $\|A u\|$ and $\|u\|_{H^2}$ are equivalent.
For $u\in D(A)$ and $v\in L^2(\Omega)$ we define the element of
$H^{-1}(\Omega)\equiv (H^1_0(\Omega))'$ by
$$
\langle(u\cdot\nabla)v,w\rangle_{H^{-1},H^1_0}=\sum_{i,j=1}^3\langle\partial_i
v_j,u_iw_j\rangle_{H^{-1},H^1_0},\ \forall w\in H^1_0(\Omega).
$$
In particular, if $v\in H^1(\Omega),$ the definition of
$\langle(u\cdot\nabla)v,w\rangle_{H^{-1},H^1_0}$ coincides with the
definition of
$$
((u\cdot\nabla)v,w)=\sum_{i,j=1}^3\int_\Omega(u_i\partial_i
v_j)w_jdx.
$$
Let us denote by $(\nabla u)^*$ the transpose of $\nabla u.$ Thus,
if $u \in D(A)$ then $(\nabla u)^* \in H^1(\Omega) \subset
L^6(\Omega)$. Consequently, for $v\in L^2(\Omega)$ we have that
$(\nabla u)^*\cdot v \in L^{3/2}(\Omega)\subset H^{-1}(\Omega)$ with
$$
\langle(\nabla u)^*\cdot
v,w\rangle_{H^{-1},H^1_0}=\sum_{i,j=1}^3\int_\Omega(\partial_ju_i)
v_iw_jdx,\ \forall w\in H^1_0(\Omega).
$$
One can check that for $u,w \in D(A), v\in L^2(\Omega),$ the
following equality holds
\begin{equation}\label{eq7}
\langle(u\cdot \nabla) v, w\rangle_{H^{-1},H^1_0}= -\langle(\nabla
w)^*\cdot v,u\rangle_{H^{-1},H^1_0}.
\end{equation}
We consider the nonlinear operator  $ B: D(A)\times D(A)\rightarrow
D(A)'$ defined by
\begin{eqnarray}\label{eq8}
\langle B(u,v),w\rangle_{D(A)',D(A)}=\langle(u\cdot \nabla)(v-\alpha
\Delta v), w\rangle_{V',V}+\langle(\nabla u)^*\cdot (v-\alpha\Delta
v),w\rangle_{V',V}.
\end{eqnarray}
Thus, from (\ref{eq7}) we have
\begin{equation}\label{eq9}
\langle B(u,v),u \rangle_{D(A)',D(A)}= 0, \ \forall \, u,v \in D(A).
\end{equation}
Also, we have that
\begin{eqnarray*}
|\langle B(u,v),w\rangle_{D(A)',D(A)}|&\leq &C\|u\|\|\nabla v\|\|A
w\|+ C\,\alpha (\|u\|_{L^6}\|\nabla w\|_{L^3}+
\|\nabla u\|\|w\|_{L^\infty})\|\Delta v\|\\
&\leq & C\|\nabla u\|\|A v\|\|A w\|+C\alpha \|\nabla u\|\|A w\|\|A
v\| \leq  C\|\nabla u\|\|A v\|\|A w\|.
\end{eqnarray*}
Therefore,
\begin{equation}\label{eq10}
\|B(u,v)\|_{D(A)'} \leq C\,\|\nabla u\|\|A v\|\leq C\Vert
u\Vert_V\Vert v\Vert_{D(A)}, \quad \forall \, u, v \in D(A),
\end{equation}
and thus, for all $u, v \in L^\infty(V)\cap L^2(D(A))$ it holds
$B(u,v) \in  L^2(D(A)')$. 
Denoting by $ \Delta_\alpha= I-\alpha \Delta$, one gets
\[\Delta_\alpha u \in L^\infty(V') \cap L^2(H)\quad \mbox{ and }
\quad \Delta_\alpha Au \in L^2(D(A)') \quad \forall u\in
L^2(D(A))\cap L^\infty(V).\] With the above notations, the system
(\ref{eq1}) can be rewritten as
\begin{equation}
\left\{
\begin{array}
[c]{rcl}%
&&\Delta_\alpha u_t +\nu \Delta_\alpha Au+ B(u,u) + \nabla p
= f \mbox{ in } Q,\\
&&\nabla\cdot u=0  \mbox{ in } Q, \\
&& u=0, \quad Au=0 \mbox{ on } \Gamma \times (0,T),\\
&& u(x,0)= u_0(x) \mbox{ in } \Omega.
\end{array}
\right.  \label{eq11}
\end{equation}
Now we are in position to establish the definition of weak solution of Problem (\ref{eq1}) (equivalently (\ref{eq11})).
\begin{definition}(\textit{Weak solution})
For $f\in L^2(Q)$ and $u_0\in V,$ a \textit{weak solution} of the
problem (\ref{eq11}) is a field $u\in L^2(D(A)) \cap
L^\infty(V)$ with $u_t \in L^2(H)$ satisfying

\begin{equation}
\left\{
\begin{array}
[c]{rcl}%
&&\frac{d}{dt}((u, w) +\alpha(\nabla u, \nabla w))
+\nu (Au,w+\alpha  Aw)\\
&&\ \ \ \ \ \ +\langle B(u,u), w\rangle_{D(A)',D(A)} 
=(f, w), \quad \forall \, w\in D(A),\\
&&\ u(x,0)= u_0(x) \mbox{ in } \Omega,
\end{array}
\right.  \label{eq16}
\end{equation}

or equivalently,
\begin{equation}
\left\{
\begin{array}
[c]{rcl}%
&&\Delta_\alpha u_t+\nu \Delta_\alpha Au+ B(u,u)
= f \mbox{ in } D(A)', \\
&& A u=0 \mbox{ in } \Gamma \times (0,T),\\
&& u(x,0)= u_0(x) \mbox{ in } \Omega.
\end{array}
\right.  \label{eq17}
\end{equation}
\end{definition}

\begin{theorem}\label{teor2} (\textit{Existence and uniqueness of weak solution}) Assuming that
$f\in L^2(Q)$ and $u_0 \in V$, there exists a unique weak solution
of (\ref{eq11}).
\end{theorem}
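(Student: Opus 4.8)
The plan is to construct a solution by the Faedo--Galerkin method and to obtain uniqueness by an energy estimate on the difference of two solutions. Since $A$ is self-adjoint, positive and has compact inverse, let $\{w_k\}_{k\ge1}\subset D(A)$ be the orthonormal basis of $H$ formed by the eigenfunctions of $A$; these are orthogonal in $V$ and in $D(A)$ as well. Writing $u_m(t)=\sum_{k=1}^m g_{km}(t)\,w_k$ and projecting the weak formulation (\ref{eq16}) onto $\mathrm{span}\{w_1,\dots,w_m\}$ yields a system of ODEs for the coefficients $g_{km}$ whose right-hand side is quadratic in the unknowns, thanks to the bilinearity of $B$. Carath\'eodory/Picard theory then gives a unique local solution $u_m$ on a maximal interval $[0,T_m)$, and the a priori bounds below will show $T_m=T$.

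First I would derive the basic energy estimate. Testing the Galerkin system with $u_m$ and using the antisymmetry $\langle B(u_m,u_m),u_m\rangle_{D(A)',D(A)}=0$ from (\ref{eq9}), I get $\tfrac12\frac{d}{dt}\big(\|u_m\|^2+\alpha\|\nabla u_m\|^2\big)+\nu\big(\|\nabla u_m\|^2+\alpha\|A u_m\|^2\big)=(f,u_m)$. Bounding $(f,u_m)\le\|f\|\,\|u_m\|$ with Young's inequality, integrating in time and applying Gronwall's lemma, one obtains uniform bounds for $u_m$ in $L^\infty(V)\cap L^2(D(A))$ depending only on $\|f\|_{L^2(Q)}$ and $\|u_0\|_V$. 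To control the time derivative I would read it off the equation, $\Delta_\alpha\partial_t u_m=P_m\big(f-B(u_m,u_m)\big)-\nu\,\Delta_\alpha A u_m$, where $P_m$ is the Galerkin projection (bounded on each space of the scale). By (\ref{eq10}), $B(u_m,u_m)$ is bounded in $L^2(D(A)')$, while the displayed mapping properties give $\Delta_\alpha A u_m\in L^2(D(A)')$; since $\Delta_\alpha=I-\alpha\Delta$ is an isomorphism from $H$ onto $D(A)'$ (being self-adjoint and an isomorphism from $D(A)$ onto $H$), I conclude that $\partial_t u_m$ is bounded in $L^2(H)$, uniformly in $m$.

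Next I would pass to the limit. The uniform bounds yield a subsequence with $u_m\rightharpoonup u$ weakly in $L^2(D(A))$, weakly-$*$ in $L^\infty(V)$, and $\partial_t u_m\rightharpoonup u_t$ weakly in $L^2(H)$. Applying Lemma \ref{lema1} with $B_0=D(A)$, $B=V$, $B_1=H$ (the embedding $D(A)\hookrightarrow V$ being compact) gives $u_m\to u$ strongly in $L^2(V)$. The main obstacle is the passage to the limit in the nonlinear term $B(u_m,u_m)$: testing against a fixed $w\in D(A)$ times a smooth scalar in time and recalling (\ref{eq8}), I would combine the strong convergence $u_m\to u$ in $L^2(V)$ with the weak convergence of $\Delta u_m$ in $L^2(H)$ to identify the limit of each product as a product of a strongly and a weakly convergent factor. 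The linear terms pass to the limit by weak convergence and $P_m w\to w$. This produces $u\in L^2(D(A))\cap L^\infty(V)$ with $u_t\in L^2(H)$ satisfying (\ref{eq16}); the initial condition is recovered from the continuity $u\in C([0,T];V)$ implied by $u\in L^2(D(A))$ and $u_t\in L^2(H)$.

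Finally, for uniqueness I would take two weak solutions $u_1,u_2$, set $w=u_1-u_2$, and use bilinearity to write $B(u_1,u_1)-B(u_2,u_2)=B(u_1,w)+B(w,u_2)$. Testing the difference equation with $w$ and invoking (\ref{eq9}) to kill $\langle B(w,u_2),w\rangle$, I obtain $\tfrac12\frac{d}{dt}\big(\|w\|^2+\alpha\|\nabla w\|^2\big)+\nu\big(\|\nabla w\|^2+\alpha\|A w\|^2\big)=-\langle B(u_1,w),w\rangle$. The remaining trilinear term I would estimate by the interpolation inequalities used to derive (\ref{eq10}), arranging each contribution to be bounded by $\varepsilon\|A w\|^2$ plus a factor of the form $C(\|u_1\|_V)\,\big(\|w\|^2+\alpha\|\nabla w\|^2\big)$; here the crucial point is that $u_1\in L^\infty(V)$ keeps these coefficients in $L^\infty(0,T)$. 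Choosing $\varepsilon$ small to absorb the $\|A w\|^2$ terms into the coercive $\nu\alpha\|A w\|^2$ and applying Gronwall's lemma with $w(0)=0$ forces $w\equiv0$. I expect the nonlinear passage to the limit in the existence part and this absorption step in uniqueness to be the only genuinely delicate points, the remaining estimates being routine.
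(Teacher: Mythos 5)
Your proposal is correct and follows precisely the route the paper itself invokes: its entire proof of this theorem is a citation of ``classical Galerkin approximations and energy estimates'' for existence and a ``standard Gronwall argument'' for uniqueness, which is exactly your construction (eigenfunction basis, energy bound in $L^\infty(V)\cap L^2(D(A))$, time-derivative bound in $L^2(H)$ via the isomorphism $I+\alpha A:H\to D(A)'$, Aubin--Lions--Simon compactness for the nonlinear limit, and Gronwall on the difference of two solutions). One point to make explicit when writing it out: the crude embeddings actually used to derive (\ref{eq10}) (namely $\|\nabla w\|_{L^3}\le C\|Aw\|$ and $\|w\|_{L^\infty}\le C\|Aw\|$) would produce the non-absorbable term $C\|u_1\|_V\|Aw\|^2$ in the uniqueness estimate, so you must instead use their interpolated (Gagliardo--Nirenberg/Agmon) refinements $\|\nabla w\|_{L^3}\le C\|w\|_V^{1/2}\|Aw\|^{1/2}$ and $\|w\|_{L^\infty}\le C\|w\|_V^{1/2}\|Aw\|^{1/2}$, which yield the bound $\varepsilon\alpha\|Aw\|^2+C(1+\|u_1\|_V^4)\bigl(\|w\|^2+\alpha\|\nabla w\|^2\bigr)$ you stated as your target, after which absorption and Gronwall go through.
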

\textit{Proof.} The existence of weak solutions follows from the classical Galerkin
approximations and energy estimates; the uniqueness follows from a
standard Gronwall argument (see e.g. \cite{BRV,real1,lans2,lans5,lans7,
lans3}).

\section{A distributed control problem: Existence of optimal solution}
We start by establishing the control in the system. We denote the
control by $v\in L^2(Q)$ which will be use as a source term in
(\ref{eq11})$_1$; thus the right-hand side of equality
(\ref{eq17})$_1$ will be defined as:
\[(f,w)  =(v,w), \quad \forall w\in D(A).\]
In order to specify exactly the problem, we make some
considerations. We define the Banach space
\[\mathbb{W}=\{ u\in L^2(D(A))\cap L^\infty(V)\,:\, u_t\in L^2(H)\},\]
with norm given by $$\Vert w\Vert_{\mathbb{W}}:=\max \{\Vert
u\Vert_{L^2(D(A))},\Vert u\Vert_{L^\infty(V)}, \Vert u_t\Vert_{L^2(H)}\}.$$ 

Since
$D(A)\hookrightarrow V \hookrightarrow H$, and $D(A)\hookrightarrow V $ compactly, from Lemma \ref{lema1}
we have
$\mathbb{W}\hookrightarrow L^2(V)$  compactly; furthermore, and as $D(A),V,H$ are Hilbert spaces,
$\mathbb{W} \hookrightarrow C([0,T];V)$ (cf. \cite{lions}). We also consider the subspace
$\mathbb{W}_0$ of $\mathbb{W}$ defined by
$$ \mathbb{W}_0:=\{ u\in \mathbb{W} : Au=0 \mbox{ on } \Gamma\times (0,T)\}.
$$

In order to establish the control problem, we assume the following general
hypotheses:

\begin{itemize}
\item[(H1)] The regularization parameters $\gamma_1, \gamma_2$ and  $\gamma_3,$ which measures the
cost of the control, are fixed positive numbers.

\item[(H2)] The initial data $ u_0 \in V$, the desired velocity
$u_d\in L^2(D(A))$ and the function $u_T \in H$.

\item[(H3)] The set of admissible
controls $\mathcal{U}_{ad}$ is defined by:
\begin{equation}\label{eq22}
 \mathcal{U}_{ad}=\{v \in L^2(Q): v_{a,i}(x,t) \leq v_i(x,t)\leq v_{b,i}(x,t)
\ a.e.  \mbox{ on } Q,\ i=1,2,3\},
\end{equation}
where the control constraints $v_a, v_b$ are required to be in $L^2(Q)$
with $ v_{a,i}(x,t) \leq v_{b,i}(x,t)$ a.e. on $Q.$ Notice that $\mathcal{U}_{ad}$ is a non-empty, convex and closed set
in $L^2(Q)$.
\end{itemize}

Under the hypotheses (H1)-(H3), for $(u,v)\in \mathbb{W}_0\times
\mathcal{U}_{ad}$ we define the following objective functional
\begin{equation}\label{eq23}
J(u,v)=\frac{\gamma_1}{2}\int_0^T\|u(t)-u_d(t)\|_{D(A)}^2
dt+\frac{\gamma_2}{2}\int_\Omega |u(x,T)-u_T(x)|^2 dx
+\frac{\gamma_3}{2}\int_0^T \|v(t)\|^2dt.
\end{equation}
Thus, we consider the following distributed optimal control problem:
\begin{equation}
\left \{
\begin{array}
[c]{rcl}%
\mbox{Minimize}&& J(u,v)\\
&&\ \ ({u},{v}) \in \mathbb{W}_0\times\mathcal{U}_{ad},
\end{array}
\right.\label{eq24}
\end{equation}
subject to the state equation
\begin{equation}
\left\{
\begin{array}
[c]{rcl}%
&& \Delta_\alpha u_t +\nu \Delta_\alpha Au+  B(u,u)
= v  \mbox{ in } L^2(D(A)'),\\
&& u(x,0)= u_0(x) \mbox{ in } V.
\end{array}
\right.  \label{eq25}
\end{equation}
Then, the set of admissible solutions to (\ref{eq24})-(\ref{eq25})
is defined as:
\begin{equation}\label{eq28}
 \mathcal{S}_{ad}=\{(u,v) \in \mathbb{W}_0\times \mathcal{U}_{ad}:
 J(u,v) < \infty \mbox{ and } (u,v)
\mbox{ satisfies } (\ref{eq25})\}.
\end{equation}
\subsection{Existence of solution}
We will show that the optimal control problem
(\ref{eq24})-(\ref{eq25}) has a solution.
\begin{theorem}
Under the assumptions $(H1)$-$(H4)$, there exists a solution
$(\hat{u}, \hat{v}) \in \mathcal{S}_{ad}$ to the optimal control
problem (\ref{eq24})-(\ref{eq25}).
\end{theorem}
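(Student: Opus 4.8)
The plan is to use the direct method of the calculus of variations. First I would verify that the admissible set $\mathcal{S}_{ad}$ is non-empty: since for any fixed $v\in\mathcal{U}_{ad}\subset L^2(Q)$ Theorem \ref{teor2} provides a unique weak solution $u\in\mathbb{W}_0$ of the state system (\ref{eq25}), and since $\mathcal{U}_{ad}$ is non-empty by (H3), there is at least one admissible pair; moreover the definition (\ref{eq23}) of $J$ together with the regularity $u\in\mathbb{W}_0$ and (H2) guarantees $J(u,v)<\infty$. Because $J\geq 0$, the infimum $m=\inf_{\mathcal{S}_{ad}}J$ is a finite non-negative real number, so I may choose a minimizing sequence $(u^n,v^n)\subset\mathcal{S}_{ad}$ with $J(u^n,v^n)\to m$.

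Next I would extract uniform bounds. The control term $\tfrac{\gamma_3}{2}\int_0^T\|v^n\|^2\,dt$ in $J$ is bounded, so $\{v^n\}$ is bounded in $L^2(Q)$; the first two terms bound $\{u^n\}$ partially, but the essential uniform estimates on $u^n$ in $\mathbb{W}_0$ come not from $J$ but from the standard energy estimates for the state equation (\ref{eq25}) (testing with $w=u^n$, using the cancellation (\ref{eq9}) of $B$, and applying Gronwall), which yield bounds for $u^n$ in $L^\infty(V)\cap L^2(D(A))$ and then, from the equation together with (\ref{eq10}), a bound for $u^n_t$ in $L^2(H)$. Hence $\{u^n\}$ is bounded in $\mathbb{W}$. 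Passing to a subsequence, I obtain $v^n\rightharpoonup\hat v$ weakly in $L^2(Q)$, $u^n\rightharpoonup\hat u$ weakly in $L^2(D(A))$, $u^n_t\rightharpoonup\hat u_t$ weakly in $L^2(H)$, $u^n\stackrel{*}{\rightharpoonup}\hat u$ weakly-$*$ in $L^\infty(V)$, and, crucially, by the compact embedding $\mathbb{W}\hookrightarrow L^2(V)$ from Lemma \ref{lema1}, strong convergence $u^n\to\hat u$ in $L^2(V)$ (and in $C([0,T];V)$ up to care at the endpoint, or at least $u^n(T)\rightharpoonup\hat u(T)$ in $H$).

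I would then show the limit pair is admissible. Since $\mathcal{U}_{ad}$ is convex and closed in $L^2(Q)$ it is weakly closed, so $\hat v\in\mathcal{U}_{ad}$. To pass to the limit in the state equation, the only delicate term is the nonlinearity $B(u^n,u^n)$: I would combine the strong $L^2(V)$ convergence of $u^n$ with the weak $L^2(D(A))$ convergence, using the bound (\ref{eq10}) to identify the weak limit of $\langle B(u^n,u^n),w\rangle$ as $\langle B(\hat u,\hat u),w\rangle$ for each fixed test function $w\in D(A)$. The remaining linear terms $\Delta_\alpha u^n_t$ and $\nu\Delta_\alpha Au^n$ pass to the limit by weak convergence directly, so $(\hat u,\hat v)$ satisfies (\ref{eq25}) with the correct initial datum (using $u^n(0)=u_0$ and the continuity into $V$); thus $\hat u\in\mathbb{W}_0$ and $(\hat u,\hat v)\in\mathcal{S}_{ad}$.

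Finally I would invoke weak lower semicontinuity of $J$. Each term of (\ref{eq23}) is a continuous convex functional of its argument: the $L^2(D(A))$-norm term is weakly l.s.c.\ under $u^n\rightharpoonup\hat u$ in $L^2(D(A))$, the control term is weakly l.s.c.\ under $v^n\rightharpoonup\hat v$ in $L^2(Q)$, and the endpoint term is weakly l.s.c.\ under $u^n(T)\rightharpoonup\hat u(T)$ in $H$. Therefore $J(\hat u,\hat v)\leq\liminf_n J(u^n,v^n)=m$, and since $(\hat u,\hat v)\in\mathcal{S}_{ad}$ forces $J(\hat u,\hat v)\geq m$, we conclude $J(\hat u,\hat v)=m$, i.e.\ $(\hat u,\hat v)$ is an optimal solution. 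I expect the main obstacle to be the passage to the limit in the nonlinear operator $B(u^n,u^n)$, where the interplay between strong convergence in the weaker norm $L^2(V)$ and weak convergence in the stronger norm $L^2(D(A))$ must be balanced carefully using the estimate (\ref{eq10}); the rest of the argument is the routine machinery of the direct method.
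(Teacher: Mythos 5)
Your proposal is correct and follows essentially the same route as the paper's proof: direct method with a minimizing sequence, energy estimates from the state equation (testing with $u^n$ and using the cancellation (\ref{eq9})) to get bounds in $L^\infty(V)\cap L^2(D(A))$ and on $u^n_t$, compactness via Lemma \ref{lema1} to obtain strong $L^2(V)$ convergence, passage to the limit in $B(u^n,u^n)$, and weak lower semicontinuity of $J$. The only cosmetic differences are that you extract the $L^2(Q)$ bound on $v^n$ from the cost functional while the paper uses the $L^2$-boundedness of $\mathcal{U}_{ad}$ itself, and you are if anything slightly more explicit than the paper about the terminal-time convergence $u^n(T)\rightharpoonup\hat u(T)$ in $H$.
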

\begin{proof} By Theorem \ref{teor2}, the pair $(u,v_a)\in \mathcal{S}_{ad}$;
thus the set $\mathcal{S}_{ad}\neq \phi$. Now, since $J$ is bounded
from below ($J(u,v) \geq 0$),  there exists a minimizing sequence
$(u^m, v^m)$ in $\mathcal{S}_{ad}$ such that
\[\lim_{m\rightarrow\infty} J(u^m,v^m)=\inf\{J(u,v): (u,v)\in\mathcal{S}_{ad}\},\]
and for all $w\in D(A)$ it holds:
\begin{equation}\label{eq29}
\langle\Delta_\alpha
u_t^m,w\rangle_{D(A)',D(A)}+\nu\langle\Delta_\alpha Au^m,
w\rangle_{D(A)',D(A)}+\langle B(u^m,u^m),w\rangle_{D(A)',D(A)} =(
v^m,w).
\end{equation}
From (\ref{eq29}), using integration by parts on $\Omega$, we have
\begin{equation}\label{eq30}
( u_t^m,w)+\alpha(\nabla u^m_t,\nabla w) +\nu(\nabla u^m, \nabla w)
+\nu \alpha( Au^m, A w)+\langle B(u^m, u^m),w\rangle=( v^m,w).
\end{equation}
Then, setting $w=u^m(t)$ in (\ref{eq30}) and taking into account
(\ref{eq9}), we get
\[
\frac12\frac{d}{dt}(\|u^m\|^2+\alpha\|\nabla u^m\|^2)+\nu \|\nabla
u^m\|^2 +\nu \alpha \|A u^m\|^2=(v^m,u^m).\] Using the H\"{o}lder
and Young inequalities it holds
\[|( v^m,u^m) | \leq \|v^m\|\|u^m\|\leq C_\nu \,\|v^m\|^2 + \frac{\nu}{2}\|\nabla u^m\|^2.\]
Therefore,
\begin{equation}\label{eq31}
\frac{d}{dt}(\|u^m\|^2+\alpha\|\nabla u^m\|^2)+\nu \|\nabla u^m\|^2
+2\nu \alpha \|A u^m\|^2\leq C_\nu \,\|v^m\|^2.
\end{equation}
Integrating (\ref{eq31}) from $0$ to $t\in [0,T]$, we obtain
\begin{eqnarray}\label{eq32}
&&\|u^m(t)\|^2+\alpha\|\nabla u^m(t)\|^2+\nu  \int_0^t (\|\nabla
u^m(s)\|^2
+\alpha\|A u^m(s)\|^2) ds \nonumber\\
&&\hspace*{1cm} \leq C\,\int_0^t \|v^m(s)\|^2 ds +
\|u_0\|^2+\alpha\|\nabla u_0\|^2.
\end{eqnarray}
Since $v^m \in \mathcal{U}_{ad}$ and $u_0\in V,$ from
(\ref{eq32}) we conclude that
\begin{eqnarray}\label{eq33}
\{u^m\}_{m\geq 1} \ \mbox{ is uniformly bounded in} \ L^\infty(V)\cap
L^2(D(A))\\
\{v^m\}_{m\geq 1} \ \mbox{ is uniformly bounded in} \ L^2(Q).\label{eq34}
\end{eqnarray}
On the other hand, from (\ref{eq29}) and by applying integration by parts on $\Omega$
we get
\[\langle\Delta_\alpha u^m_t,w\rangle_{D(A)',D(A)}=-\nu(\nabla u^m, \nabla
w) -\nu \alpha(Au^m, A w)-\langle B(u^m,
u^m),w\rangle_{D(A)',D(A)}+( v^m,w),
\] and then, by using the H\"{o}lder inequality together
inequalities (\ref{eq10}) and (\ref{eq33}), we obtain
\begin{eqnarray*}
|\langle\Delta_\alpha u^m_t,w\rangle_{D(A)',D(A)}|
&\leq & C_{\nu,\alpha}(\|\nabla u^m\|+\|Au^m\|+\|B(u^m, u^m)\|_{D(A)'}+\|v^m\|)\|w\|_{D(A)}\\
&\leq &C_{\nu,\alpha}(\|\nabla u^m\|+\|Au^m\|+\|\nabla u^m\|\|A u^m\|+\|v^m\|)\|w\|_{D(A)}\\
&\leq &C_{\nu,\alpha}(\|\nabla u^m\|+\|Au^m\|+\|v^m\|)\|w\|_{D(A)}.
\end{eqnarray*}
Since $\langle \Delta_\alpha u_t,w\rangle_{D(A)',D(A)}= \langle
u_t+\alpha A u_t,w\rangle_{D(A)',D(A)} $ for all $w\in D(A)$, the
last inequality implies
\[\|u_t^m+\alpha A u^m_t\|_{D(A)'}\leq C\,(\|\nabla u^m\|+\|Au^m\|+\|v^m\|),\]
and by using the Young inequality
\begin{equation}\label{eq35}
\|u_t^m+\alpha A u^m_t\|^2_{D(A)'} \leq C\,(\|\nabla
u^m\|^2+\|Au^m\|^2+\|v^m\|^2).
\end{equation}
By integrating (\ref{eq35}) from $0$ to $t\in [0,T]$ and taking into
account (\ref{eq33})-(\ref{eq34}) we have
\begin{equation}\label{eq36}
\int_0^t \|u_t^m(s)+\alpha A u^m_t(s)\|^2_{D(A)'}ds \leq C.
\end{equation}
Since the operator $A$ is self adjoint and positive, the following
inequality holds (see \cite{lans3})
\begin{equation}\label{eq37}
\|v\|^2_{D(A)'} \leq \|v+\alpha A v\|_{D(A)'}^2 \ \mbox{for each }\
v\in D(A)'.
\end{equation}
Then, by using triangular inequality and (\ref{eq37}), we get
\begin{equation}\label{eq38}
\|\alpha Au^m_t\|_{D(A)'}^2\leq \|u^m_t+\alpha Au^m_t\|_{D(A)'}^2+
\|u^m_t\|^2_{D(A)'} \leq C\,\|u^m_t+\alpha Au^m_t\|_{D(A)'}^2.
\end{equation}
Thus, from (\ref{eq36}) and (\ref{eq38}), we conclude that

\begin{equation}\label{eq39}
\{Au^m_t\}_{m\geq 1} \ \mbox{ is uniformly bounded in} \ L^2(D(A)'),
\end{equation}
\begin{equation}\label{eq40}
\{u^m_t\}_{m\geq 1} \  \mbox{ is uniformly bounded in} \ L^2(Q).
\end{equation}
Moreover, from
(\ref{eq33}) and (\ref{eq40}) we have
\begin{equation}\label{eq41}
\{u^m\}_{m\geq 1} \  \mbox{ is uniformly bounded in } \ \mathbb{W},
\end{equation}
with $\mathbb{W}$ compactly imbedded in $L^2(V)$ (cf. Lemma \ref{lema1}).\newline
 Then, from
(\ref{eq33}), (\ref{eq34}), (\ref{eq39}), (\ref{eq40}) and
(\ref{eq41}), there exists a subsequence, which again we denote by
$(u^m, v^m),$ converging to some limit $(\hat{u},\hat{v}) \in
\mathbb{W} \times\mathcal{U}_{ad}$ such that as $m \rightarrow
\infty$,
\begin{eqnarray*}
&& u^m \rightarrow \hat{u} \mbox{ weakly in } L^2(D(A)) \mbox{ and strongly in } L^2(V),\\
&& u^m_t \rightarrow \hat{u}_t \mbox{ weakly in } L^2(Q),\\
&& A u^m_t \rightarrow A\hat{u}_t\mbox{ weakly in } L^2((D(A)'),\\
&& v^m \rightarrow \hat{v} \mbox{ weakly in } L^2(Q).
\end{eqnarray*}
Passing to the limit as $m\rightarrow \infty$ in (\ref{eq29}), we
can obtain that $(\hat{u}, \hat{v})$ satisfies (\ref{eq25}), with $A \hat{u}=0$ on $\Gamma\times (0,T),$ and
$J(\hat{u}, \hat{v})<\infty.$
Since $u_0=u^m(0)$ in $V$ for all $m$, and $u^m\in \mathbb{W},$ it holds $u_0=\hat{u}(0)$ in $V$. Consequently $\hat{u}\in \mathbb{W}_0$ and 
thus we get that $(\hat{u},
\hat{v}) \in \mathcal{S}_{ad}.$ Then we obtain
\begin{equation}\label{eq42}
\lim_{m\rightarrow \infty} J(u^m, v^m) = \inf\{J(u,v): (u,v)
\in \mathcal{S}_{ad}\}\leq J(\hat{u}, \hat{v}).
\end{equation}
As the functional $J: \mathcal{S}_{ad} \rightarrow \mathbb{R}$ is
weakly lower semicontinuous, we have that (cf. \cite{brez})
\begin{equation}\label{eq43}
J(\hat{u}, \hat{v}) \leq \lim_{m\rightarrow \infty}\inf J(u^m, v^m).
\end{equation}
Finally, from (\ref{eq42}) and (\ref{eq43}) we conclude that
$\displaystyle J(\hat{u},\hat{v}) =\inf\{J(u,v): (u,v) \in
\mathcal{S}_{ad}\}$.
\end{proof}

\section{First order optimality conditions}
In order to derive an optimal system by using the Lagrange
multiplier method, we formulate an abstract Lagrange multiplier
principle. Let $X$ and $Y$ be two Banach spaces,
$\mathcal{J}:X\rightarrow \mathbb{R}$ and $\mathcal{G}:X\rightarrow
Y.$ Consider the problem
\begin{equation}\label{eq43a}
\min_{z\in X}\mathcal{J}(z) \qquad \mbox{subject to } \
\mathcal{G}(z)=0.
\end{equation}
The Lagrange function corresponding to the problem (\ref{eq43a}) is defined by
\[\mathcal{L}(z,\lambda_0,\lambda)= \lambda_0 \mathcal{J}(z)-\langle \lambda, \mathcal{G}(z)\rangle_{Y',Y}, \]
where $\lambda_0\in \mathbb{R}$ and $\lambda \in Y'$ are called
Lagrange multipliers. Then the following result is known (see e.g. \cite{gunz,iofi}).

\begin{theorem}\cite{gunz,iofi}\label{ioffe}(The Lagrange multiplier rule). Let $\hat{z}$ be a solution of (\ref{eq43a}).
Assume that the functional $\mathcal{J}$ and the mapping
$\mathcal{G}$ are continuously differentiable at the point $\hat{z}$
and that the rang of the mapping $\mathcal{G}_z(\hat{z}):X
\rightarrow Y$ is closed. Then there exists a nonzero Lagrange
multiplier $(\lambda_0,\lambda)\in \mathbb{R}^+\times Y'$, such that
\begin{eqnarray*}
\mathcal{L}_z(\hat{z},\lambda_0, \lambda)h &=&\lambda_0 \mathcal{J}_z(\hat{z})h-\langle \lambda, \mathcal{G}_z(\hat{z})h\rangle_{Y',Y}=0\quad \forall \, h \in X,\\
\mathcal{L}(\hat{z},\lambda_0, \lambda)&\leq & \mathcal{L}({z},\lambda_0, \lambda),\ \forall z\in X,
\end{eqnarray*}
where $\mathcal{L}_z(\cdot,\cdot,\cdot)$ denotes the Fr\'{e}chet
derivative of $\mathcal{L}.$ Furthermore, if
$\mathcal{G}_z(\hat{z}):X \rightarrow Y$ is an epimorphism, then
$\lambda_0\neq 0$ and $\lambda_0$ can be taken as 1.
\end{theorem}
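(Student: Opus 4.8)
The plan is to prove the statement through the Fritz John dichotomy governed by the range $R:=\mathcal{G}_z(\hat{z})(X)\subseteq Y$, which by hypothesis is closed. Exactly one of two alternatives holds: either $R$ is a proper subspace of $Y$, or $R=Y$ (i.e. $\mathcal{G}_z(\hat{z})$ is an epimorphism). These two cases produce the multiplier with $\lambda_0=0$ and $\lambda_0=1$ respectively, and in both we must exhibit a nonzero pair $(\lambda_0,\lambda)\in\mathbb{R}^{+}\times Y'$ with $\mathcal{L}_z(\hat{z},\lambda_0,\lambda)=0$. The analytic heart is the regular case, where the abstract minimality of $\hat{z}$ must be converted into a pointwise identity in the dual space $X'$.

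First I would treat the degenerate case $R\neq Y$. Since $R$ is a closed proper subspace, the Hahn--Banach theorem yields a nonzero functional $\lambda\in Y'$ annihilating $R$, so that $\langle\lambda,\mathcal{G}_z(\hat{z})h\rangle_{Y',Y}=0$ for every $h\in X$. Choosing $\lambda_0=0$, the pair $(\lambda_0,\lambda)$ is nonzero and satisfies $\mathcal{L}_z(\hat{z},\lambda_0,\lambda)h=-\langle\lambda,\mathcal{G}_z(\hat{z})h\rangle_{Y',Y}=0$ for all $h$; note that optimality of $\hat{z}$ plays no role here, which is precisely the abnormal Fritz--John phenomenon.

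Next, suppose $\mathcal{G}_z(\hat{z})$ is an epimorphism. The key step is Lyusternik's theorem (see \cite{iofi}): because $\mathcal{G}$ is $C^1$ at $\hat{z}$ and $\mathcal{G}_z(\hat{z})$ is onto, the tangent set to the feasible manifold $M=\{z:\mathcal{G}(z)=0\}$ at $\hat{z}$ equals $\ker\mathcal{G}_z(\hat{z})$; explicitly, each $h\in\ker\mathcal{G}_z(\hat{z})$ admits a feasible arc $z(t)=\hat{z}+th+o(t)$ with $\mathcal{G}(z(t))=0$. Since $\hat{z}$ minimizes $\mathcal{J}$ on $M$, the scalar map $t\mapsto\mathcal{J}(z(t))$ is minimal at $t=0$; differentiating and repeating with $-h$ gives $\mathcal{J}_z(\hat{z})h=0$ for all $h\in\ker\mathcal{G}_z(\hat{z})$, i.e. $\mathcal{J}_z(\hat{z})\in(\ker\mathcal{G}_z(\hat{z}))^{\perp}$. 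By the Banach closed range theorem, for an operator with closed range this annihilator coincides with $\mathrm{Ran}\,\mathcal{G}_z(\hat{z})^{*}$, where $\mathcal{G}_z(\hat{z})^{*}:Y'\to X'$ is the adjoint. Hence there exists $\lambda\in Y'$ with $\mathcal{G}_z(\hat{z})^{*}\lambda=\mathcal{J}_z(\hat{z})$, that is
\[
\mathcal{J}_z(\hat{z})\,h \;=\; \langle\lambda,\,\mathcal{G}_z(\hat{z})\,h\rangle_{Y',Y}\qquad\forall\,h\in X,
\]
and taking $\lambda_0=1$ gives $\mathcal{L}_z(\hat{z},1,\lambda)=0$ with $\lambda_0\neq0$, as claimed.

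I expect the decisive difficulty to be the regular-case construction of feasible arcs, namely the surjective (Lyusternik--Graves) implicit function theorem: controlling the remainder $o(t)$ uniformly rests on the quantitative open mapping estimate for $\mathcal{G}_z(\hat{z})$ together with $C^1$ continuity of $\mathcal{G}_z$ near $\hat{z}$, and is the only place a Newton/contraction iteration is needed; the Hahn--Banach and closed-range steps are then routine. Finally, the minimum form $\mathcal{L}(\hat{z},\lambda_0,\lambda)\leq\mathcal{L}(z,\lambda_0,\lambda)$ for all $z$ is not a further derivative condition but the saddle (minimum-principle) restatement: once the stationarity $\mathcal{L}_z(\hat{z},\lambda_0,\lambda)=0$ holds, the inequality follows for every $z$ whenever $z\mapsto\mathcal{L}(z,\lambda_0,\lambda)$ is convex, which is the structure present in the intended application.
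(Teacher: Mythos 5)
The paper offers no proof of this theorem: it is quoted as a known result from \cite{gunz,iofi}, so the only meaningful comparison is with the classical argument in those references. Your treatment of the stationarity condition is exactly that classical argument, and it is correct: in the abnormal case (closed proper range) Hahn--Banach supplies a nonzero $\lambda\in Y'$ annihilating the range, and $(\lambda_0,\lambda)=(0,\lambda)$ works; in the normal (surjective) case Lyusternik's theorem identifies the tangent space of $\{\mathcal{G}=0\}$ at $\hat z$ with $\ker\mathcal{G}_z(\hat z)$, optimality of $\hat z$ along feasible arcs gives $\mathcal{J}_z(\hat z)\in(\ker\mathcal{G}_z(\hat z))^{\perp}$, and the closed range theorem converts this annihilator into $\mathrm{Ran}\,\mathcal{G}_z(\hat z)^{*}$, producing $\lambda$ with $\lambda_0=1$. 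This is precisely the proof in Ioffe--Tikhomirov, and your identification of the Lyusternik/Graves step as the analytic core is accurate.

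The one point you do not fully resolve is the minimum principle $\mathcal{L}(\hat z,\lambda_0,\lambda)\le\mathcal{L}(z,\lambda_0,\lambda)$ for all $z\in X$: you obtain it only under the extra hypothesis that $z\mapsto\mathcal{L}(z,\lambda_0,\lambda)$ is convex, which is not among the assumptions of the theorem. This is a genuine mismatch with the literal statement, but the defect lies in the statement rather than in your argument, because without convexity the global inequality is simply false. For instance, on $X=\mathbb{R}^2$, $Y=\mathbb{R}$, take $\mathcal{J}(z_1,z_2)=z_1^2-z_2^2$ and $\mathcal{G}(z_1,z_2)=z_2$; then $\hat z=(0,0)$ is the constrained minimizer and $\mathcal{G}_z(\hat z)$ is onto, stationarity forces (up to scaling) $\lambda_0=1$, $\lambda=0$, yet $\mathcal{L}(z,1,0)=z_1^2-z_2^2$ is unbounded below, so no multiplier satisfies the global minimum principle. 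What \cite{iofi} actually proves is the extremal principle for smooth--convex problems, in which the minimum principle is asserted only in the variable carrying convex structure; this is also how the paper uses it, since inequality (\ref{e5}) is invoked only in the control $v$, over the convex set $\mathcal{U}_{ad}$, where $v\mapsto\mathcal{L}(\hat u,v,\lambda)$ is quadratic and convex. So your convexity caveat is exactly the right repair, and your proof is as complete as the loosely transcribed statement permits.
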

In order to derive the first-order optimality conditions for the
problem (\ref{eq24})-(\ref{eq25}), we will apply Theorem
\ref{ioffe}.

Observing (\ref{eq25}), we define the operator
\begin{eqnarray*}
F:  \mathbb{W}_0\times L^2(D(A)') & \rightarrow &  L^2(D(A)')\\
(u,v) \ \ & \rightarrow &  F(u,v):=\Delta_\alpha u_t+\nu
\Delta_\alpha Au+ B(u,u)- v.
\end{eqnarray*}
\begin{lemma}\label{w21}
The operator $F$ is Fr\'echet differentiable with respect to $u$.
\end{lemma}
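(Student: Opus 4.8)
The plan is to exhibit an explicit candidate for the partial Fréchet derivative and then verify the two defining properties: that it is a bounded linear operator and that the remainder is superlinearly small. Since $F$ is the sum of the two $u$-linear maps $u\mapsto\Delta_\alpha u_t$ and $u\mapsto\nu\Delta_\alpha Au$, the $u$-constant term $-v$, and the bilinear map $u\mapsto B(u,u)$, I would guess the derivative at $(u,v)$ in a direction $h\in\mathbb{W}_0$ to be
\begin{equation*}
F_u(u,v)h=\Delta_\alpha h_t+\nu\Delta_\alpha Ah+B(h,u)+B(u,h).
\end{equation*}

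First I would check that $F_u(u,v)$ maps $\mathbb{W}_0$ boundedly into $L^2(D(A)')$. The term $\Delta_\alpha h_t$ is controlled in $L^2(D(A)')$ by $\|h_t\|_{L^2(H)}$ through the pairing $\langle\Delta_\alpha h_t,w\rangle_{D(A)',D(A)}=(h_t,w+\alpha Aw)$; the term $\nu\Delta_\alpha Ah$ is controlled by $\|Ah\|=\|h\|_{D(A)}$ in $L^2$, exactly as in the preparatory remark guaranteeing $\Delta_\alpha Au\in L^2(D(A)')$ for $u\in L^2(D(A))\cap L^\infty(V)$; and the two bilinear terms are controlled by the key estimate (\ref{eq10}), which gives $\|B(h,u)+B(u,h)\|_{D(A)'}\le C(\|h\|_V\|u\|_{D(A)}+\|u\|_V\|h\|_{D(A)})$ and hence, after pulling $\|\cdot\|_{L^\infty(V)}$ out of the time integral, a bound of the form $C\,\|u\|_{\mathbb W}\,\|h\|_{\mathbb W}$. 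Linearity in $h$ is immediate, so $F_u(u,v)\in\mathcal{L}(\mathbb{W}_0,L^2(D(A)'))$.

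The heart of the argument is the remainder estimate, and here the bilinearity of $B$ makes it clean. Subtracting, the linear terms cancel exactly, and expanding $B(u+h,u+h)=B(u,u)+B(u,h)+B(h,u)+B(h,h)$ gives
\begin{equation*}
F(u+h,v)-F(u,v)-F_u(u,v)h=B(h,h).
\end{equation*}
It then remains only to show $\|B(h,h)\|_{L^2(D(A)')}=o(\|h\|_{\mathbb{W}_0})$. Applying (\ref{eq10}) pointwise in time and pulling $\|h\|_{L^\infty(V)}$ out of the integral yields
\begin{equation*}
\|B(h,h)\|_{L^2(D(A)')}\le C\,\|h\|_{L^\infty(V)}\,\|h\|_{L^2(D(A))}\le C\,\|h\|_{\mathbb{W}}^2,
\end{equation*}
which is quadratic in $\|h\|_{\mathbb{W}_0}$ and therefore $o(\|h\|_{\mathbb{W}_0})$ as $\|h\|_{\mathbb{W}_0}\to0$. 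This confirms that $F$ is Fréchet differentiable in $u$ with derivative $F_u(u,v)$.

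I do not expect a genuine obstacle here: the only points requiring care are bookkeeping ones, namely confirming that each of the three operator pieces really lands in $L^2(D(A)')$ — which is precisely what the preparatory mapping properties of $\Delta_\alpha$, $\Delta_\alpha A$ and estimate (\ref{eq10}) were set up to provide — and recognizing that because $B$ is exactly bilinear the remainder is literally $B(h,h)$, so no higher-order expansion or delicate cancellation is needed. I would also remark, for use in the subsequent optimality analysis, that $u\mapsto F_u(u,v)$ is itself affine, the $u$-dependence entering only through the bilinear terms, which immediately upgrades this to continuous Fréchet differentiability.
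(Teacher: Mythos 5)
Your proposal is correct and follows essentially the same route as the paper: the same candidate derivative $F_u(u,v)h=\Delta_\alpha h_t+\nu\Delta_\alpha Ah+B(u,h)+B(h,u)$, the same exact cancellation from the bilinearity of $B$ leaving remainder $B(h,h)$, and the same quadratic bound via estimate (\ref{eq10}). Your explicit check that $F_u(u,v)$ is bounded on $\mathbb{W}_0$ and your closing remark on continuous differentiability are small, welcome additions the paper leaves implicit, but they do not change the argument.
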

\begin{proof} Using (\ref{eq8}), $B(u+w,u+w)=B(u,u)+B(u,w)+B(w,u) +B(w,w)$.
Then
\begin{equation}\label{eq44}
F(u+w,v) - F(u,v) =\Delta_\alpha w_t+\nu \Delta_\alpha Aw +B(u,w)
+B(w,u)+B(w,w).
\end{equation}
Denoting by $Lw=\Delta_\alpha w_t+\nu \Delta_\alpha Aw +B(u,w)
+B(w,u)$, from (\ref{eq44}) we get
\begin{equation}\label{eq45}
\|F(u+w,v) - F(u,v)-Lw\|_{L^2(D(A)')}=\|B(w,w)\|_{L^2(D(A)')}.
\end{equation}
Since $w\in \mathbb{W}_0$, from (\ref{eq10}) we obtain
\[\|B(w,w)\|_{L^2(D(A)')}\leq C\|w\|_{L^\infty(V)}
\|w\|_{L^2(D(A))}\leq C\|w\|^2_{\mathbb{W}_0},\] and then from
(\ref{eq45}) we have
\[\|F(u+w,v) - F(u,v)-Lw\|_{L^2(D(A)')}\leq C\|w\|^2_{\mathbb{W}_0}.\]
Thus,
\[\lim_{\|w\|_{\mathbb{W}_0}\rightarrow 0}\frac{\|F(u+w,v)
- F(u,v)-(\Delta_\alpha w_t +\nu \Delta_\alpha
Aw+B(u,w)+B(w,u))\|_{L^2(D(A)')}} {\|w\|_{\mathbb{W}_0}}=0.\]
Therefore, the Fr\'echet derivative of $F$ with respect to $u$ in an
arbitrary $(u,v)$ is given by the operator $F_u(u,v):\mathbb{W}_0\rightarrow
L^2(D(A)')$ such that for each $w \in \mathbb{W}_0$,
\begin{equation}\label{eq46}
F_u(u,v)w=\Delta_\alpha w_t+\nu \Delta_\alpha Aw +B_u(u,u)w,
\end{equation}
where
$B_u(u,u)w=B(u,w)+B(w,u)$ is
the Fr\'echet derivative of $B$ with respect to $u$ in an arbitrary point
$(u,u)$.
\end{proof}

Now let us to consider the closed linear subspace $\mathbb{Y}_0$ of
$\mathbb{W}_0$ defined by
\[\mathbb{Y}_0=\{ w \in \mathbb{W}_0 : w(x,0)=0 \ \forall x\in \Omega\}.\]
The following preliminary result holds:
\begin{lemma}\label{l1}
Let $(u,v)\in \mathbb{W}_0\times L^2(Q)$ and $g\in L^2(D(A)')$ be
given. Then there exists a unique solution $w\in \mathbb{Y}_0$ of
the linear problem
\begin{eqnarray}\label{e1}
F_u(u, v)w=g.
\end{eqnarray}
\end{lemma}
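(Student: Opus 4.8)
The plan is to solve the linear problem (\ref{e1}) by a Galerkin scheme built on the eigenfunctions $\{e_k\}$ of the Stokes operator $A$, exactly as in the existence part of Theorem \ref{teor2}, and then to pass to the limit; uniqueness will follow separately from linearity. Note first that, by (\ref{eq46}), $F_u(u,v)$ does not actually depend on $v$, so (\ref{e1}) is the linear equation $\Delta_\alpha w_t+\nu\Delta_\alpha Aw+B(u,w)+B(w,u)=g$ with $w(0)=0$. Pairing with $\phi\in D(A)$ and integrating by parts, its weak form reads $(w_t,\phi)+\alpha(\nabla w_t,\nabla\phi)+\nu(\nabla w,\nabla\phi)+\nu\alpha(Aw,A\phi)+\langle B(u,w)+B(w,u),\phi\rangle_{D(A)',D(A)}=\langle g,\phi\rangle_{D(A)',D(A)}$. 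For each $m$ I seek $w^m(t)=\sum_{k=1}^m c_k(t)e_k$ solving the associated finite-dimensional system; since this system is \emph{linear} in the coefficients $c_k$, with coefficients that are measurable and locally integrable in $t$ through $u(t)$, and a forcing term coming from $g$, Carath\'eodory theory yields a solution on $[0,T]$ once the a priori bound below rules out blow-up.

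The core of the argument is the energy estimate obtained by testing with $\phi=w^m$. The term $\langle B(w^m,u),w^m\rangle$ is harmless, since it vanishes by (\ref{eq9}) (its first slot coincides with the test function). \textbf{The main obstacle} is $\langle B(u,w^m),w^m\rangle$, for which (\ref{eq9}) does not apply and whose crude bound via (\ref{eq10}), namely $C\|\nabla u\|\|Aw^m\|^2$, cannot be absorbed by the dissipation $\nu\alpha\|Aw^m\|^2$ because $\|\nabla u\|$ need not be small. To get around this I will expand $\langle B(u,w^m),w^m\rangle$ explicitly from (\ref{eq7})--(\ref{eq8}) into four integrals. Applying (\ref{eq7}) with $v=w^m-\alpha\Delta w^m\in L^2$ and using $\nabla\cdot u=0$, the top-order piece $\langle(u\cdot\nabla)w^m,w^m\rangle=\tfrac12\int(\nabla\cdot u)|w^m|^2$ vanishes, and the three remaining integrals are controlled by H\"older together with the three-dimensional Gagliardo--Nirenberg inequalities $\|f\|_{L^3}\le C\|f\|^{1/2}\|\nabla f\|^{1/2}$ and $\|f\|_{L^4}\le C\|f\|^{1/4}\|\nabla f\|^{3/4}$, the embedding $V\hookrightarrow L^6$, and Young's inequality. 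Each contribution then splits as a small multiple of $\|\nabla w^m\|^2+\alpha\|Aw^m\|^2$ (absorbed into the dissipation) plus a factor $\Phi(t)\,(\|w^m\|^2+\alpha\|\nabla w^m\|^2)$, where $\Phi$ is built from $\|\nabla u\|^4$ and $\|\nabla u\|\,\|Au\|$; since $u\in L^\infty(V)\cap L^2(D(A))$, one checks $\Phi\in L^1(0,T)$. With $y(t)=\|w^m(t)\|^2+\alpha\|\nabla w^m(t)\|^2$ and $y(0)=0$, this gives $\tfrac12 y'+\tfrac{\nu}{2}\|\nabla w^m\|^2+\tfrac{\nu\alpha}{2}\|Aw^m\|^2\le \Phi(t)\,y+C\|g\|_{D(A)'}^2$, and Gronwall's lemma yields uniform bounds for $w^m$ in $L^\infty(V)\cap L^2(D(A))$.

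To obtain $w_t\in L^2(H)$, as required for membership in $\mathbb{W}_0$, I read it off from the equation: since $w\in L^2(D(A))$, $g\in L^2(D(A)')$ and $B(u,w),B(w,u)\in L^2(D(A)')$ by (\ref{eq10}), we have $\Delta_\alpha w_t=g-\nu\Delta_\alpha Aw-B(u,w)-B(w,u)\in L^2(D(A)')$; because $\Delta_\alpha=I+\alpha A$ is an isomorphism from $H$ onto $D(A)'$, this forces $w_t\in L^2(H)$, the corresponding Galerkin bound being obtained exactly as in (\ref{eq35})--(\ref{eq40}). Passing to the limit is easier than in Theorem \ref{teor2} because the problem is linear in $w$: extracting a subsequence with $w^m\rightharpoonup w$ in $L^2(D(A))$, $w^m\stackrel{*}{\rightharpoonup}w$ in $L^\infty(V)$, $w^m_t\rightharpoonup w_t$ in $L^2(H)$, and (using the compact embedding $\mathbb{W}\hookrightarrow L^2(V)$ of Lemma \ref{lema1}) $w^m\to w$ strongly in $L^2(V)$, all the linear terms pass to the limit after moving the top-order derivatives in $B$ onto the smooth test functions via (\ref{eq7}). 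Thus $w$ solves (\ref{e1}), and since $\mathbb{W}\hookrightarrow C([0,T];V)$ the condition $w(0)=0$ is preserved, so $w\in\mathbb{Y}_0$.

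Uniqueness is immediate from linearity: if $w_1,w_2\in\mathbb{Y}_0$ both solve (\ref{e1}), their difference $\bar w=w_1-w_2$ satisfies $F_u(u,v)\bar w=0$ with $\bar w(0)=0$, and the energy estimate above with $g=0$ gives $y(t)\le\int_0^t\Phi(s)\,y(s)\,ds$, whence $\bar w\equiv0$ by Gronwall. I expect the only delicate point to be the explicit estimate of $\langle B(u,w),w\rangle$ sketched in the second paragraph, that is, producing a bound whose non-absorbable part carries an $L^1(0,T)$ coefficient; the rest is routine.
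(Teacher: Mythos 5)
Your proposal is correct and follows exactly the route the paper indicates: its proof of Lemma \ref{l1} is a one-line appeal to ``classical Galerkin approximations and energy estimates'' (citing \cite{BRV,real1}), which is precisely the scheme you carry out. Your detailed treatment of the key point --- the cancellation $\langle B(w^m,u),w^m\rangle=0$ from (\ref{eq9}) and the splitting of $\langle B(u,w^m),w^m\rangle$ into a small multiple of the dissipation plus an $L^1(0,T)$-coefficient times $\|w^m\|^2+\alpha\|\nabla w^m\|^2$, followed by Gronwall, the $w_t$ bound via $\Delta_\alpha$, and the linear passage to the limit --- is a sound filling-in of the details the paper leaves to the references.
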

\begin{proof} The proof follows by using the classical Galerkin approximations and
energy estimates (see \cite{BRV,real1}).
\end{proof}

\begin{lemma}
The functional $J$ is Fr\'{e}chet differentiable with respect to
$u.$
\end{lemma}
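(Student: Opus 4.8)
The plan is to treat $J(\cdot,v)$, for fixed $v$, as a sum of two quadratic functionals of $u$ plus a term that is constant in $u$, and to read off the Fr\'echet derivative directly from the algebraic expansion of each square. Since the third term $\frac{\gamma_3}{2}\int_0^T\|v(t)\|^2\,dt$ does not depend on $u$, it contributes nothing to the derivative with respect to $u$. I would fix $(u,v)\in\mathbb{W}_0\times L^2(Q)$ and an arbitrary increment $w\in\mathbb{W}_0$, and then compute $J(u+w,v)-J(u,v)$ by expanding the two remaining squares.

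For the tracking term I would use the Hilbert-space identity $\|a+b\|_{D(A)}^2=\|a\|_{D(A)}^2+2(a,b)_{D(A)}+\|b\|_{D(A)}^2$ with $a=u(t)-u_d(t)$ and $b=w(t)$, integrate over $(0,T)$, and isolate the part that is linear in $w$, namely $\gamma_1\int_0^T(u(t)-u_d(t),w(t))_{D(A)}\,dt$, leaving the quadratic remainder $\frac{\gamma_1}{2}\|w\|_{L^2(D(A))}^2$. For the final-observation term I would expand $|(u+w)(T)-u_T|^2$ pointwise and integrate over $\Omega$, splitting off the linear part $\gamma_2\int_\Omega(u(x,T)-u_T(x))\cdot w(x,T)\,dx$ and the remainder $\frac{\gamma_2}{2}\|w(T)\|^2$. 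The candidate derivative is then the operator $J_u(u,v):\mathbb{W}_0\rightarrow\mathbb{R}$ given by
\[
J_u(u,v)w=\gamma_1\int_0^T(u(t)-u_d(t),w(t))_{D(A)}\,dt+\gamma_2\int_\Omega(u(x,T)-u_T(x))\cdot w(x,T)\,dx.
\]

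It then remains to check that this is a bounded linear functional on $\mathbb{W}_0$ and that the total remainder is $o(\|w\|_{\mathbb{W}_0})$. Linearity is immediate, and the tracking contributions are controlled by the Cauchy--Schwarz inequality in $L^2(D(A))$ together with $\|w\|_{L^2(D(A))}\le\|w\|_{\mathbb{W}_0}$. The step that needs care is the final-time term, since it involves the point value $w(T)$: here I would invoke the embedding $\mathbb{W}_0\hookrightarrow C([0,T];V)$ recalled earlier, which guarantees that $w(T)\in V\hookrightarrow H$ is well defined with $\|w(T)\|\le C\|w(T)\|_V\le C\|w\|_{\mathbb{W}_0}$. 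This yields both the boundedness estimate $|\gamma_2(u(T)-u_T,w(T))|\le C\|w\|_{\mathbb{W}_0}$ and the remainder bound $\frac{\gamma_2}{2}\|w(T)\|^2\le C\|w\|_{\mathbb{W}_0}^2$.

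Combining the two remainders gives $|J(u+w,v)-J(u,v)-J_u(u,v)w|\le C\|w\|_{\mathbb{W}_0}^2$, so that dividing by $\|w\|_{\mathbb{W}_0}$ and letting $\|w\|_{\mathbb{W}_0}\to 0$ establishes Fr\'echet differentiability with the derivative displayed above. The only genuine obstacle is the admissibility and control of the final-observation term, which is resolved entirely by the continuous embedding into $C([0,T];V)$; everything else is the standard differentiation of quadratic functionals on a Hilbert space.
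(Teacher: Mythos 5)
Your proposal is correct and follows essentially the same route as the paper: expand the two quadratic terms, read off the linear part as the candidate derivative $J_u(u,v)w=\gamma_1\int_0^T(w,u-u_d)_{D(A)}\,dt+\gamma_2(w(T),u(T)-u_T)$ (the paper writes the $D(A)$ inner product as $(Aw,Au-Au_d)$), and bound the quadratic remainder by $C\|w\|_{\mathbb{W}_0}^2$. Your explicit appeal to the embedding $\mathbb{W}_0\hookrightarrow C([0,T];V)$ to control $\|w(T)\|$ is the same ingredient the paper uses implicitly, so the two arguments coincide.
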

\begin{proof} From definition of the functional $J$ we get:
\begin{eqnarray*} J(u+w,v)-J(u,v)&=&\gamma_1\int_0^T (Aw,
Au-Au_d)dt + \frac{\gamma_1}{2}\int_0^T\|Aw\|^2 dt\\
&&+\gamma_2(w(T),u(T)-u_T)+\frac{\gamma_2}{2}\Vert w(T)\Vert^2.
\end{eqnarray*}
Then
\begin{eqnarray*}
&&\vert J(u+w,v)-J(u,v)- \gamma_1\int_0^T (Aw, Au-
Au_d)dt-\gamma_2(w(T),u(T)-u_T)\vert\\
&&\ \ \ \leq C (\|w\|^2_{L^2(D(A))}+\Vert w(T) \Vert^2)\leq
C\|w\|^2_{\mathbb{W}_0},
\end{eqnarray*}
which implies that
\begin{eqnarray*}
\lim_{\|w\|_{\mathbb{W}_0 \rightarrow 0}}\frac{|J(u+w,v)-J(u,v)
-\gamma_1\int_0^T (Aw,
Au-Au_d)dt-\gamma_2(w(T),u(T)-u_T)|}{\|w\|_{\mathbb{W}_0} }=0.
\end{eqnarray*}
Thus, the Fr\'echet derivative of $J$ with respect to $u$ in an
arbitrary $(u,v)$ is the operator $J_u(u,v):\mathbb{W}_0\rightarrow
\mathbb{R}$ defined by:
\begin{equation}\label{eq49}
J_u(u,v)w=\gamma_1\int_0^T (Aw, Au-Au_d)dt+\gamma_2(w(T),u(T)-u_T),\
w \in \mathbb{W}_0.
\end{equation}
With the above notations, let us define the Lagrange functional
\begin{eqnarray}\label{l2}
\mathcal{L}(u,v,\lambda)=J(u,v)-\langle F(u,v),\lambda\rangle_{L^2(D(A)'),L^2(D(A))},
\end{eqnarray}
where $\lambda\in L^2(D(A)).$ Thus, the Fr\'{e}chet derivative of
$\mathcal{L}$ with respect to $u$ is
\begin{eqnarray}\label{l3}
\mathcal{L}_u(u,v,\lambda)w=J_u(u,v)w-\langle
F_u(u,v)w,\lambda\rangle_{L^2(D(A)'),L^2(D(A))},\ \forall w\in \mathbb{W}_0.
\end{eqnarray}
\end{proof}
Now we will state and prove the necessary first-order optimality
conditions
\begin{theorem}\label{cond}(Necessary conditions) Let $(\hat{u},\hat{v})\in S_{ad}$ be a solution of the optimal control
problem (\ref{eq24})-(\ref{eq25}). Then, there exists a $\lambda\in
L^2(D(A))$ such that
\begin{eqnarray}\label{e2}
\mathcal{L}_u(\hat{u}, \hat{v},\lambda)h=J_u(\hat{u},
\hat{v})h-\langle F_u(\hat{u}, \hat{v})h,\lambda)\rangle_{L^2(D(A)'),L^2(D(A))}=0, \
\forall h\in \mathbb{Y}_0.
\end{eqnarray}
Moreover, the minimum principle holds
\begin{equation}\label{e5}
\mathcal{L}(\hat{u}, \hat{v},\lambda)\leq \mathcal{L}(\hat{u},
v,\lambda) \quad \forall v\in \mathcal{U}_{ad}.
\end{equation}
\end{theorem}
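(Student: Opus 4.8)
The plan is to recast the control problem (\ref{eq24})--(\ref{eq25}) into the abstract form (\ref{eq43a}) and to invoke the Lagrange multiplier rule of Theorem \ref{ioffe}. I would take $Y=L^2(D(A)')$, set $\mathcal{J}=J$ and $\mathcal{G}=F$, so that the equality constraint $\mathcal{G}(z)=0$ is exactly the state equation (\ref{eq25}); the initial condition $u(0)=u_0$ is encoded by requiring all admissible perturbations of the state $\hat u$ to lie in $\mathbb{Y}_0$ (which is why the stationarity (\ref{e2}) is tested against $h\in\mathbb{Y}_0$), while the control constraint $v\in\mathcal{U}_{ad}$ is retained as the convex set entering the minimum principle. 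The differentiability hypotheses of Theorem \ref{ioffe} are already at hand: $F$ is continuously Fr\'echet differentiable in $u$ by Lemma \ref{w21} with derivative (\ref{eq46}), $J$ is Fr\'echet differentiable in $u$ with derivative (\ref{eq49}), and the dependence on $v$ is trivial since $F$ is affine and $J$ is quadratic in $v$.

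The decisive hypothesis to verify is that the constraint linearization has closed range; in fact I would prove the stronger property that $F_u(\hat u,\hat v):\mathbb{Y}_0\to L^2(D(A)')$ is an epimorphism, since by the last assertion of Theorem \ref{ioffe} this forces $\lambda_0\neq 0$ and permits the normalization $\lambda_0=1$. This is precisely the content of Lemma \ref{l1}: for every $g\in L^2(D(A)')$ the linear problem $F_u(\hat u,\hat v)w=g$ has a unique solution $w\in\mathbb{Y}_0$, so $F_u(\hat u,\hat v)$ is a continuous bijection between the Banach spaces $\mathbb{Y}_0$ and $L^2(D(A)')$; the open mapping theorem then makes it a topological isomorphism, whose range is all of $L^2(D(A)')$ and hence closed. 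As $F$ is affine in $v$, the surjectivity of $F_u$ already forces the full linearization to be onto, so no extra work is needed there. I expect this surjectivity to be the only genuinely substantive point, and it is already packaged in Lemma \ref{l1}. Finally, since $D(A)$ is a Hilbert space and thus reflexive, the multiplier lives in $Y'=(L^2(D(A)'))'=L^2(D(A))$, matching the regularity $\lambda\in L^2(D(A))$ claimed in the statement and consistent with the pairing used in the Lagrangian (\ref{l2}).

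With these verifications, Theorem \ref{ioffe} produces a multiplier $\lambda\in L^2(D(A))$ together with two conclusions. The stationarity $\mathcal{L}_z(\hat u,\hat v,\lambda)h=0$, read in the state directions $h\in\mathbb{Y}_0$ (the perturbations that preserve $u(0)=u_0$), is precisely (\ref{e2}) once one substitutes the explicit derivatives (\ref{eq46}) and (\ref{eq49}) into (\ref{l3}). For the minimum principle (\ref{e5}) I would specialize the minimum-principle conclusion of Theorem \ref{ioffe} to control variations, keeping $\hat u$ fixed and letting $v$ range over $\mathcal{U}_{ad}$; here the map $v\mapsto\mathcal{L}(\hat u,v,\lambda)$ is convex, being the sum of the quadratic control-penalty term of $J$ and the term $-\langle F(\hat u,v),\lambda\rangle$ which is affine in $v$, so that together with the convexity and closedness of $\mathcal{U}_{ad}$ the inequality $\mathcal{L}(\hat u,\hat v,\lambda)\leq\mathcal{L}(\hat u,v,\lambda)$ for all $v\in\mathcal{U}_{ad}$ follows. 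The main obstacle, the closed-range (epimorphism) condition, having been reduced to Lemma \ref{l1}, the remaining steps are the routine bookkeeping just described.
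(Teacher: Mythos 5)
Your proposal follows essentially the same route as the paper's own proof: both rest on Theorem \ref{ioffe}, with differentiability supplied by Lemma \ref{w21} (and the analogue for $J$) and the decisive closed-range/epimorphism hypothesis --- hence $\lambda_0=1$ --- supplied by Lemma \ref{l1}; the only real difference is presentational, in that the paper makes your ``admissible perturbations of $\hat u$ lie in $\mathbb{Y}_0$'' explicit through the change of variables $u=\hat u+z$, applying Theorem \ref{ioffe} to the shifted problem (\ref{eq24b})--(\ref{eq25b}), whose optimal solution is $z=0$ and whose linearized constraint $G_z(0)$ coincides with $F_u(\hat u,\hat v)$ on $\mathbb{Y}_0$. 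One caution: your derivation of (\ref{e5}) by ``specializing the minimum-principle conclusion of Theorem \ref{ioffe} to control variations'' is exactly as loose as the paper's one-line ``follows directly from Theorem \ref{ioffe}'', because in the only legitimate application the control is frozen at $\hat v$ and the abstract problem (\ref{eq43a}) carries no convex constraint, so the theorem yields a minimum principle in the state direction only; to make (\ref{e5}) rigorous one still needs the first-order condition $\mathcal{L}_v(\hat u,\hat v,\lambda)(v-\hat v)\geq 0$ (obtained from optimality of $(\hat u,\hat v)$ under control perturbations, the solvability of the linearized state equation from Lemma \ref{l1}, and the adjoint identity (\ref{e2})), after which your convexity observation about $v\mapsto\mathcal{L}(\hat u,v,\lambda)$ does indeed close the argument.
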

\begin{proof} We will apply Theorem \ref{ioffe}; for that, in particular, we need to prove the surjectivity of the operator $F_u(\hat{u}, \hat{v})$. Thus, in order to simplify the calculations, we rewrite the problem
(\ref{eq24})-(\ref{eq25}) in an equivalent optimal control problem.
For this purpose, by considering $z\in \mathbb{Y}_0,$ we use the
change of variable $u=\hat{u}+z.$ Thus, by replacing $u$ in
(\ref{eq25}) we get
\begin{eqnarray}\label{e3}
\Delta_\alpha z_t +\nu \Delta_\alpha Az+
B(\hat{u},z)+B(z,\hat{u})+B(z,z)=0.
\end{eqnarray}
Therefore we obtain the following equivalent optimal control problem: 
\begin{equation}\label{eq24b}
\min_{{z}\in \mathbb{Y}_0} \widetilde{J}(z):=\min_{{z}\in \mathbb{Y}_0} J(\hat{u}+z,\hat{v}),
\end{equation}
subject to the state equation
\begin{eqnarray}
 G(z)=\Delta_\alpha z_t +\nu \Delta_\alpha Az+  B(\hat{u},z)+B(z,\hat{u})+B(z,z)=0.\label{eq25b}
\end{eqnarray}
Observe that ${z}=0$ is the optimal solution of the control
problem (\ref{eq24b})-(\ref{eq25b}) provided $(\hat{u}, \hat{v})$ minimizes $J$.

Thus, we will apply Theorem \ref{ioffe} for the problem
(\ref{eq24b})-(\ref{eq25b}). For that, we will verify all its
conditions.\newline
 $-$ \textit{\textbf{Step one.} The operator $G$ is continuously
differentiable with respect to $z$.}\newline Following the proof of Lemma \ref{w21}, it is not difficult to obtain that the
derivative of the operator $G:\mathbb{Y}_0\rightarrow L^2(D(A)')$ at a point $\hat{z}$ is
given by the linear and continuous operator
$G_z(\hat{z}):\mathbb{Y}_0\rightarrow L^2(D(A)')$ defined by
\begin{eqnarray}\label{e4}
G_z(\hat{z})h=\Delta_\alpha h_t +\nu \Delta_\alpha Ah+
B(\hat{u},h)+B(h,\hat{u})+B(h,\hat{z})+B(\hat{z},h).
\end{eqnarray}
\begin{remark}\label{r1}
Notice that at the optimal solution $\hat{z}=0$ of $\widetilde{J}$ it holds
$G_z(0)h=F_u(\hat{u},\hat{v})h$ for all $h\in \mathbb{Y}_0,$ being
$(\hat{u},\hat{v}) \in S_{ad}$ the optimal solution of the control
problem (\ref{eq24})-(\ref{eq25}).
\end{remark}
$-$ {\it \textbf{Step two.} The functional $\widetilde{J}$ is
continuously differentiable with respect to $z$.}\newline Notice
that from definition of $J$ we have
\begin{eqnarray} {J}(\hat{u}+z+\epsilon h,\hat{v})-J(\hat{u}+z,\hat{v})&=&\gamma_1\epsilon\int_0^T ( Ah,
A(\hat{u}+z)-Au_d)dt + \frac{\epsilon^2\gamma_1}{2}\int_0^T\|Ah\|^2 dt\nonumber\\
&+&\epsilon\gamma_2(h(T),(\hat{u}+z)(T)-u_T)+\frac{\epsilon\gamma_2}{2}\Vert
h(T)\Vert^2.\label{w1}
\end{eqnarray}
Since
\begin{eqnarray*}
\widetilde{J}_z(z)h=\lim_{\epsilon \rightarrow
0}\frac{\widetilde{J}(z+\epsilon h)-\widetilde{J}(z)}{\epsilon}=\lim_{\epsilon \rightarrow
0}\frac{J(\hat{u}+z+\epsilon h,\hat{v})-J(\hat{u}+z,\hat{v})}{\epsilon},
\end{eqnarray*}
then, from (\ref{w1}), the derivative of $\widetilde{J}$ is given by the linear and
continuous operator $\widetilde{J}_z(z):\mathbb{Y}_0\rightarrow
\mathbb{R}$ defined by
\begin{equation}\label{eq49z}
\widetilde{J}_z(z)h=\gamma_1\int_0^T (Ah,
A\hat{u}+Az-Au_d)dt+\gamma_2(h(T),\hat{u}(T)+z(T)-u_T),\ \forall h
\in \mathbb{Y}_0.
\end{equation}
\begin{remark}\label{r2}
Notice that at the optimal solution $\hat{z}=0$ of $\widetilde{J}$ it holds
$\widetilde{J}_z(0)h=J_u(\hat{u},\hat{v})h,$ for all $h\in
\mathbb{Y}_0,$ being  $(\hat{u},\hat{v}) \in S_{ad}$ the optimal solution of the
control problem (\ref{eq24})-(\ref{eq25}).
\end{remark}

$-$\textit{\textbf{Step three.} $G_z(0):\mathbb{Y}_0\rightarrow L^2(D(A)')$ is surjective.}

Observing (\ref{e4}), the derivative of $G$ at the optimal solution
$\hat{z}=0$ of $\widetilde{J}$ is
\begin{eqnarray}
G_z(0)h=\Delta_\alpha h_t +\nu \Delta_\alpha Ah+
B(\hat{u},h)+B(h,\hat{u}).
\end{eqnarray}
Then, by using Lemma \ref{l1} together Remark \ref{r1}, for each
$g\in L^2(D(A)')$ there exists a unique $h\in \mathbb{Y}_0$ such
that
 $G_z(0)h=g.$ Notice that the range of the mapping $G_z(0):\mathbb{Y}_0\rightarrow L^2(D(A)')$ is a closed
set. Thus, the conditions of Theorem \ref{ioffe} are verified. Consecuently, by defining the Lagrange functional
\begin{equation}\label{eq51b}
\widetilde{\mathcal{L}}(z,\lambda)= \widetilde{J}(z)-\langle
G(z),\lambda\rangle_{L^2(D(A)'),L^2(D(A))},
\end{equation}
Theorem \ref{ioffe} guarantees the existence of a
$\lambda\in L^2(D(A))$ such that
\begin{equation}
\widetilde{\mathcal{L}}_z(0,\lambda)h= \widetilde{J}_z(0)h -\langle
G_z(0)h,\lambda\rangle_{L^2(D(A)'),L^2(D(A))}=0,\ \forall h\in
\mathbb{Y}_0.\label{eq51c}
\end{equation}
From Remark \ref{r1}, Remark \ref{r2}, (\ref{l3}) and
(\ref{eq51c}), we obtain
\begin{equation}\label{51d}
\widetilde{\mathcal{L}}_z(0,\lambda)h=\mathcal{L}_u(\hat{u},\hat{v})h
\ \ \forall h\in \mathbb{Y}_0,
\end{equation}
with $(\hat{u},\hat{v})\in S_{ad}$ the optimal solution of $J.$ Therefore, from (\ref{eq51c})
and (\ref{51d}), the equality (\ref{e2}) is verified. The inequality (\ref{e5}) follows directly from Theorem \ref{ioffe}.
\end{proof}
\begin{remark}
Following Theorem 1.5 in \cite{fursikov} (see also \cite{iofi}), since $\mathcal{U}_{ad}$ is a convex set, the minimum
principle (\ref{e5}) implies
\begin{equation}\label{eq56}
\mathcal{L}_v(\hat{u},\hat{v},\lambda)(v-\hat v) \geq  0 \quad
\forall \,v\in \mathcal{U}_{ad}.
\end{equation}
\end{remark}

\subsection{The weak formulation of an optimality system.}
The optimality system will be obtained from the necessary
optimality conditions given in Theorem \ref{cond}.

From (\ref{eq46}), (\ref{eq49}) and (\ref{e2}), we
obtain the adjoint equation in a weak formulation

\begin{eqnarray}
\int_0^T\langle\Delta_\alpha h_t+\nu \Delta_\alpha A h
+B_u(\hat{u},\hat{u})h,\lambda \rangle_{D(A)',D(A)} dt=\gamma_1\int_0^T (Ah, A\hat{u}-Au_d)dt&&\nonumber\\
+\gamma_2(h(T),\hat{u}(T)-u_T), \forall h\in
\mathbb{Y}_0.&&\label{eq57}
\end{eqnarray}
From the minimum principle (\ref{e5}) we have
\begin{eqnarray*}
0\leq J(\hat{u},{v})-J(\hat{u},\hat{v})+\langle
F(\hat{u},\hat{v})-F(\hat{u},{v}),\lambda \rangle_{L^2(D(A)'),
L^2(D(A))},
\end{eqnarray*}
which implies that
\begin{eqnarray}\label{e6}
0\leq \frac{\gamma_3}{2}\int_0^T(\Vert v\Vert^2-\Vert
\hat{v}\Vert^2)dt+( v-\hat{v},\lambda).
\end{eqnarray}
Using the equality $\Vert v\Vert-\Vert \hat{v}
\Vert^2=2(v-\hat{v},\hat{v})+\Vert v- \hat{v}\Vert^2,$ from
(\ref{e6}) we get
\begin{eqnarray}\label{e7bb}
0\leq \int_0^T\gamma_3 ( v-\hat{v},\hat{v}) dt+\int_0^T (
v-\hat{v},\lambda)  dt+\frac{\gamma_3}{2}\int_0^T\Vert
v-\hat{v}\Vert^2 dt.
\end{eqnarray}
From (\ref{e7bb}) we can extract the following optimality condition
\begin{eqnarray}
\int_0^T( \gamma_3\hat{v}+\lambda, v-\hat{v}) dt\geq 0  \quad
\forall\, v\in \mathcal{U}_{ad}.
\end{eqnarray}
Thus we have the variational inequality
\begin{equation}\label{eq79}
(\hat{v} + \frac{1}{\gamma_3}\lambda, v-\hat{v}) \geq 0 \quad
\mbox{a.e. in }\ Q, \, \forall\, v\in \mathcal{U}_{ad}.
\end{equation}
Moreover, since $\mathcal{U}_{ad}$ is a convex and closed set in
$L^2(Q),$ by the theorem of the projection onto a closed convex set (see
\cite{brez}), the control $\hat{v}$ in the inequality (\ref{eq79})
can be characterized as a projection; thus we have the optimality
condition
\begin{equation}\label{eq80}
\hat{v}= Proj_{\mathcal{U}_{ad}}\big( -\frac{1}{\gamma_3}
\lambda\big) \ \mbox{ a.e. in } \ Q.
\end{equation}
Consequently, the equations (\ref{eq25}), (\ref{eq57}) and the
condition (\ref{eq80}) form an optimality system
 in a weak formulation for the optimal control problem considered.
\begin{remark}
Taking into account the definition of $\mathcal{U}_{ad},$ the projection representation (\ref{eq80}) for
$\hat{v}=(\hat{v}_1,\hat{v}_2,\hat{v}_3)$ is in each component
\[\hat{v}_i=Proj_{[v_{ai}, v_{bi}]}\big( -\frac{1}{\gamma} \lambda_i\big) \ \mbox{a.e. in } \ Q, \ i=1,2,3.\]
\end{remark}

\subsection{The strong form of the optimality system}
We wish to represent the optimality system as a system of partial
differential equations with boundary, initial and terminal
conditions. Since we do not know at this moment whether the
$\lambda_t$ exists, we need to analyze the regularity of 
$\lambda.$

Using integration by parts, for $\lambda \in L^2(D(A))$
and $h\in \mathbb{Y}_0$ we get
\begin{eqnarray*}
\langle\Delta_\alpha
h_t,\lambda\rangle_{D(A)',D(A)}&=&(h_t,\lambda)+\alpha(\nabla h_t,
\nabla\lambda) =(h_t,\lambda)-\alpha(h_t, \Delta\lambda)
=(\Delta_\alpha\lambda,h_t),
\end{eqnarray*}
and then
\begin{eqnarray} \label{eq72}
\langle\Delta_\alpha h_t, \lambda\rangle_{L^2(D(A)'),L^2(D(A))}&=&
(\Delta_\alpha\lambda,h_t)_{L^2(Q),L^2(Q)}.
\end{eqnarray}

Also, by using integration by parts in (\ref{eq57}), for $ h\in
\mathbb{Y}_0$ and $ \lambda \in L^2(D(A))$, we have
\begin{eqnarray*}
\langle\nu \Delta_\alpha A h,\lambda\rangle_{D(A)',D(A)}&=&\nu(A h,
\lambda)-\alpha \nu\langle\Delta A h, \lambda\rangle_{D(A)',D(A)}
=\nu (A h, \lambda) -\alpha\nu( A h, \Delta \lambda)\\
&=&\nu (A h,\Delta_\alpha \lambda)=\langle\nu A\Delta_\alpha
\lambda, h\rangle_{D(A)',D(A)}.
\end{eqnarray*}
Then for all $h\in \mathbb{Y}_0$ we can write
\begin{equation}\label{eq58}
\langle\nu \Delta_\alpha A
h,\lambda\rangle_{L^2(D(A)'),L^2(D(A))}=\langle \nu A\Delta_\alpha
\lambda, h\rangle_{L^2(D(A)'),L^2(D(A))}.
\end{equation}
Since $B_u(\hat{u},\hat{u}):\mathbb{W}_0\rightarrow L^2(D(A)')$ and $\mathbb{Y}\subset \mathbb{W},$
then the adjoint operator of $B_u(\hat{u},\hat{u}),$ denoted by
$B_u^*(\hat{u},\hat{u}),$ is defined by:
\begin{equation}\label{eq64}
\langle B^*_u(\hat{u}, \hat{u})\lambda, h
\rangle_{\mathbb{Y}'_0,\mathbb{Y}_0}=\langle B_u(\hat{u},\hat{u}) h,
\lambda \rangle_{L^2(D(A)'),\,L^2(D(A))}.
\end{equation}
On the other hand, since $A=-P\Delta,$ we can obtain
\begin{eqnarray}
\gamma_1\int_0^T (Ah, A\hat{u}-Au_d)dt &=&-\gamma_1\int_0^T (\Delta
h, A(\hat{u}-u_d))dt\nonumber\\
&=&-\gamma_1\int_0^T \langle\Delta
A(\hat{u}-u_d),h\rangle_{D(A)',D(A)}dt.\label{e8bb}
\end{eqnarray}
Thus, from (\ref{eq57}), (\ref{eq72}), (\ref{eq58}), (\ref{eq64})
and (\ref{e8bb}), we get

\begin{eqnarray}
(\Delta_\alpha\lambda,h_t)_{L^2(Q),L^2(Q)}=-\langle\nu
A\Delta_\alpha \lambda, h\rangle_{L^2(D(A)'),L^2(D(A))}
-\langle B^*_u(\hat{u}, \hat{u})\lambda, h\rangle_{\mathbb{Y}'_0,\mathbb{Y}_0}&&\nonumber\\
-\gamma_1\langle\Delta
A(\hat{u}-u_d),h\rangle_{L^2(D(A)'),L^2(D(A))}+\gamma_2(h(T),\hat{u}(T)-u_T).&&\label{ops2}
\end{eqnarray}


In order to obtain a representation of the weak time derivative of
$\Delta_\alpha\lambda$ we analyze the regularity of $B^*_u(\hat{u},
\hat{u})\lambda$ in (\ref{ops2}).

 Notice that from (\ref{eq8}) and (\ref{eq7}) we get
\begin{eqnarray}\label{eq61}
\langle B_u(\hat{u},\hat{u})h,\lambda
\rangle_{D(A)',D(A)}&=&\langle\hat{u}\cdot\nabla \Delta_\alpha h,
\lambda\rangle_{V',V}
-\alpha((\nabla \hat{u})^*\cdot \Delta h, \lambda)\nonumber \\
&&+\langle h\cdot\nabla\Delta_\alpha\hat{u}, \lambda\rangle_{V',V}
-\alpha((\nabla h)^*\cdot \Delta\hat{u}, \lambda)\nonumber\\
&=&-(\hat{u}\cdot\nabla \lambda,\Delta_\alpha h)
-\alpha(\lambda \cdot\nabla \hat{u}, \Delta h)\nonumber\\
&&-(h\cdot\nabla \lambda,\Delta_\alpha \hat{u}) -\alpha(\lambda
\cdot\nabla h, \Delta\hat{u}).
\end{eqnarray}
We bound the terms in (\ref{eq61}). From H\"older and Sobolev inequalities we obtain
\begin{eqnarray}
|(\hat{u}\cdot\nabla \lambda,\Delta_\alpha h)|&\leq &
C\|\hat{u}\|_{L^6}\|\nabla \lambda\|_{L^3}\|\Delta_\alpha h\|
\leq C\|\hat{u}\|_V \|\lambda\|_{D(A)} \|h\|_{D(A)},\label{eq65}\\
|(\lambda \cdot\nabla \hat{u}, \Delta h)| &\leq &
C\|\lambda\|_{L^\infty}\|\nabla \hat{u}\|\| \Delta h\| \leq
C\|\lambda\|_{D(A)}\|\hat{u}\|_V \|h\|_{D(A)}.\label{eq66}
\end{eqnarray}
By observing that $ w\cdot \nabla v =0$ on $\Gamma$ if $w,v \in
D(A)$, and using integration by parts on $\Omega$, for $w, v, z \in
D(A)$ we have
\begin{equation}\label{eq67}
(w\cdot \nabla v, \Delta z)= (\nabla (w\cdot \nabla v), \nabla z)
=(\nabla v  \nabla w, \nabla z) + (w  \nabla(\nabla v), \nabla z),
\end{equation}
where $w  \nabla(\nabla v)=\sum_{i=1}^3w_i\frac{\partial}{\partial
x_i}\nabla v.$ Then, by using (\ref{eq67}), the fact that $\|\nabla
v\|_{L^4}\leq C\|v\|_{D(A)}$ and $D(A) \subset L^\infty(\Omega)$, we
obtain
\begin{eqnarray}
|(h\cdot\nabla \lambda,\Delta_\alpha \hat{u})|&=&
|(h\cdot\nabla \lambda,\hat{u})-\alpha (h\cdot\nabla \lambda,\Delta \hat{u})|\nonumber\\
&\leq & |(h\cdot\nabla \lambda,\hat{u})| +\alpha |(\nabla
\lambda\nabla h,\nabla \hat{u})|
+\alpha |(h \nabla(\nabla \lambda),\nabla \hat{u})|\nonumber \\
&\leq & C\|h\|_{L^3}\|\nabla \lambda\|\|\hat{u}\|_{L^6} +C(\|\nabla
\lambda\|_{L^4}\|\nabla h\|_{L^4}
+\|h\|_{L^\infty}\| \nabla(\nabla \lambda)\|)\|\nabla \hat{u}\|\nonumber \\
&\leq &C \|h\|_{D(A)}\|\lambda\|_{D(A)}\|\hat{u}\|_V,\label{eq68}\\
|(\lambda\cdot\nabla h,\Delta\hat{u})| &\leq & |(\nabla h\nabla
\lambda,\nabla \hat{u})|
+|( \lambda\nabla(\nabla h),\nabla \hat{u})|\nonumber \\
&\leq & C(\|\nabla h\|_{L^4}\|\nabla \lambda\|_{L^4}
+\|\lambda\|_{L^\infty}\| \nabla(\nabla h)\|)\|\nabla \hat{u}\|\nonumber\\
&\leq &C \|h\|_{D(A)}\|\lambda\|_{D(A)}\|\hat{u}\|_V.\label{eq69}
\end{eqnarray}
From (\ref{eq64}), (\ref{eq61})-(\ref{eq66}), (\ref{eq68}) and
(\ref{eq69}), and by using the H\"older inequality, for $\lambda \in
L^2(D(A))$, $\hat{u}\in \mathbb{W}_0$ and $h \in \mathbb{Y}_0$, we
have
\begin{eqnarray*}
|\langle B^*_u(\hat{u}, \hat{u})\lambda,
h\rangle_{\mathbb{Y}'_0,\mathbb{Y}_0}|\leq C
\|\hat{u}\|_{L^\infty(V)} \|\lambda\|_{L^2(D(A))}\|h\|_{L^2(D(A))},
\end{eqnarray*}
which implies
\begin{equation}\label{eq70}
B^*_u(\hat{u}, \hat{u})\lambda \in L^2(D(A)').
\end{equation}
Then, for all $h\in \mathbb{Y}_0$ we can rewrite (\ref{ops2}) as the
following equality
\begin{eqnarray*}
(\Delta_\alpha\lambda,h_t)_{L^2(Q),L^2(Q)}&=&\langle -\nu
A\Delta_\alpha \lambda - B^*_u(\hat{u}, \hat{u})\lambda -
\gamma_1\Delta
A(\hat{u}-u_d),h\rangle_{L^2(D(A)'),L^2(D(A))}\\
&&+\gamma_2(h(T),\hat{u}(T)-u_T).
\end{eqnarray*}
Since $h(T)$ can be arbitrary, when
$\Delta_\alpha\lambda(T)=\gamma_2(\hat{u}(T)-u_T),$ we have the
existence of a representation of $\Delta_\alpha \lambda_t$ in
distributional sense as being
$$\Delta_\alpha \lambda_t=\nu A\Delta_\alpha
\lambda+B^*_u(\hat{u}, \hat{u})\lambda+\gamma_1 \Delta
A(\hat{u}-u_d).$$ 
Thus we obtain that $\lambda \in L^2(D(A))$ is the
solution of
\begin{equation}
\left\{
\begin{array}[c]{rcl}
&&\Delta_\alpha\lambda_t-\nu A\Delta_\alpha \lambda-B^*_u(\hat{u},
\hat{u})\lambda=\gamma_1 \Delta A(\hat{u}-u_d)\ \mbox{in}\ L^2(D(A)'),\label{e17}\\
&&\Delta_\alpha\lambda(T)=\gamma_2(\hat{u}(T)-u_T).
\end{array}
\right.
\end{equation}
From (\ref{eq64}), (\ref{eq61}) and (\ref{eq70}) we have
\begin{eqnarray}\label{eq75}
\langle B^*_u(\hat{u}, \hat{u})\lambda,
h\rangle_{D(A)',D(A)}&=&-(\hat{u}\cdot\nabla \lambda,h)
+\alpha(\hat{u}\cdot\nabla \lambda,\Delta h)
-\alpha(\lambda \cdot\nabla \hat{u}, \Delta h)\nonumber \\
&&-(h\cdot\nabla \lambda, \Delta_\alpha \hat{u}) +\alpha(\lambda
\cdot\nabla \Delta\hat{u},h).
\end{eqnarray}
Observing that $ v\cdot \nabla w =0$ on $\Gamma$ if $v, w\in D(A)$,
and using integration by parts on $\Omega$, for $\lambda, h \in
D(A)$ we get
\begin{eqnarray}\label{eq76}
\alpha(\hat{u}\cdot\nabla \lambda,\Delta h)- \alpha (\lambda
\cdot\nabla \hat{u}, \Delta h) &=&-\alpha(\nabla(\hat{u}\cdot\nabla
\lambda),\nabla h) +\alpha(\nabla(\lambda\cdot\nabla
\hat{u}),\nabla h)\nonumber\\
&=&\alpha(\Delta(\hat{u}\cdot\nabla\lambda),
h)-\alpha(\Delta(\lambda\cdot\nabla\hat{u}), h) .
\end{eqnarray}
Taking into account (\ref{eq7}), we get
\begin{equation}\label{eq77}
-(h\cdot\nabla \lambda, \Delta_\alpha \hat{u}) =\langle h\cdot\nabla
\Delta_\alpha \hat{u},\lambda\rangle_{V',V} =-((\nabla
\lambda)^*\cdot \Delta_\alpha \hat{u},h).
\end{equation}
Thus, from (\ref{eq75})-(\ref{eq77}) we obtain
\begin{eqnarray*}
\langle B^*_u(\hat{u}, \hat{u})\lambda, h\rangle_{D(A)', D(A)}
&=&\langle-\hat{u}\cdot\nabla \lambda+\alpha\Delta(\hat{u}\cdot\nabla
\lambda)
 -\alpha \Delta(\lambda\cdot\nabla \hat{u}), h\rangle_{D(A)', D(A)}\\
&&-\langle (\nabla \lambda)^*\cdot \Delta_\alpha \hat{u}
+\alpha\lambda \cdot\nabla \Delta\hat{u},h\rangle_{D(A)', D(A)},
\end{eqnarray*}
which implies the following equality in $L^2(D(A))':$
\begin{equation}\label{eq78}
B^*_u(\hat{u}, \hat{u})\lambda=-\hat{u}\cdot\nabla \lambda
+\alpha\Delta(\hat{u}\cdot\nabla \lambda) -\alpha(\lambda\cdot\nabla
\hat{u}) -(\nabla \lambda)^*\cdot \Delta_\alpha \hat{u}
+\alpha\lambda \cdot\nabla \Delta\hat{u}.
\end{equation}

Therefore, from (\ref{e17}) and (\ref{eq78}), we have
\begin{equation}\label{eq76}
\left\{
\begin{array}{rcl}
\Delta_\alpha \lambda_t-\nu \Delta_\alpha
A\lambda+\hat{u}\cdot\nabla \lambda
&+&\alpha\Delta(\hat{u}\cdot\nabla \lambda+\lambda\cdot\nabla
\hat{u})
+(\nabla \lambda)^*\cdot \Delta_\alpha \hat{u}\\
-\alpha\lambda \cdot\nabla \Delta\hat{u}
&=&\gamma_1\Delta A(\hat{u}-u_d )\quad \mbox{ in } \ L^2(D(A)'),\\
\nabla\cdot \lambda&=& 0 \ \mbox{in } Q,\\
\lambda&=&0 \ \mbox{on } \Gamma \times (0,T),\\
\Delta_\alpha \lambda(T)&=&\gamma_2 (\hat{u}(T)-u_T)\quad \mbox{ in
} \Omega.
\end{array}
\right.
\end{equation}

Summarizing the state equation (\ref{eq25}), the adjoint equation
(\ref{eq76}) and the optimality condition (\ref{eq80}) we get the
optimality system, in the strong form, as desired.

\end{document}